\documentclass[onefignum,onetabnum]{siamart190516}


\usepackage{lipsum}
\usepackage{amsmath,amssymb,amsfonts,latexsym,stmaryrd}
\usepackage{graphicx,float}
\usepackage{mathrsfs} 
\usepackage{color}
\usepackage{epstopdf}
\usepackage{algorithmic}
\usepackage{multirow}
\ifpdf
\DeclareGraphicsExtensions{.eps,.pdf,.png,.jpg}
\else
\DeclareGraphicsExtensions{.eps}
\fi



\def\O{\Omega}


%

\newcommand{\abs}[1]{\lvert#1\rvert}

\usepackage{booktabs}
\usepackage{float}

\newcommand\bu{\boldsymbol{u}}
\newcommand\bv{\boldsymbol{v}}
\newcommand\bw{\boldsymbol{w}}

\newcommand\bn{\boldsymbol{n}}




\def\CT{{\mathcal T}}






\renewcommand\H{\mathrm{H}}


\renewcommand\O{\Omega}



\renewcommand\div{\mathop{\mathrm{div}}\nolimits}



\newcommand{\vertiii}[1]{{\left\vert\kern-0.25ex\left\vert\kern-0.25ex\left\vert #1 
		\right\vert\kern-0.25ex\right\vert\kern-0.25ex\right\vert}}


\newsiamremark{remark}{Remark}
\newsiamremark{hypothesis}{Hypothesis}
\crefname{hypothesis}{Hypothesis}{Hypotheses}
\newsiamthm{claim}{Claim}

\headers{A finite element model for membrane channel }{Nicolás Carro, David Mora and Jesus Vellojin}

\title{A finite element model for concentration polarization and osmotic effects in a membrane channel \thanks{Submitted to the editors DATE.
		\funding{The authors were partially supported by ANID-Chile through project Anillo of Computational Mathematics for Desalination Processes ACT210087. The second author was partially supported by DICREA through project 2120173 GI/C Universidad del Bío-Bío, by the National Agency for Research and Development, ANID-Chile through FONDECYT project 1220881, and by project Centro
			de Modelamiento Matemático (CMM), ACE210010 and FB210005, BASAL funds for centers of excellence. 
}}}

\author{Nicolás Carro\thanks{GIMNAP-Departamento de Matem\'atica, Universidad del B\'io - B\'io, Casilla 5-C, Concepci\'on, Chile. \email{ncarro@ubiobio.cl}.} \and David Mora\thanks{GIMNAP-Departamento de Matem\'atica, Universidad del B\'io - B\'io, Casilla 5-C, Concepci\'on, Chile. \email{dmora@ubiobio.cl}.}
	\and Jesus Vellojin\thanks{Corresponding author. GIMNAP-Departamento de Matem\'atica, Universidad del B\'io - B\'io, Casilla 5-C, Concepci\'on, Chile. \email{jvellojin@ubiobio.cl}.}}

\usepackage{amsopn}


\ifpdf
\hypersetup{
	pdftitle={A Finite element model for cross-flow membrane channel},
	pdfauthor={Nicolás Carro, David Mora, and Jesus Vellojin}
}
\fi




\begin{document}
	
	\maketitle
	
	\begin{abstract}
		In this paper we study a mathematical model that represents the concentration polarization and osmosis effects in a reverse osmosis cross-flow channel with porous membranes at some of its boundaries. The fluid is modeled using the Navier-Stokes equations and Darcy's law is used to impose the momentum balance on the membrane. The scheme consist of a conforming finite element method with the velocity–pressure formulation for the Navier-Stokes equations, together with a primal scheme for the convection–diffusion equations. The Nitsche method is used to impose the permeability condition across the membrane. Several numerical experiments are performed to show the robustness of the method. The resulting model accurately replicates the analytical models and predicts similar results to previous works. It is found that the submerged configuration has the highest permeate production, but also has the greatest pressure loss of all three configurations studied.
	\end{abstract}
	
	\begin{keywords}
		Navier-Stokes equations, Reverse osmosis,  Finite elements, Nitsche method
	\end{keywords}
	
	
	\section{Introduction}
	Membrane-based desalination methods have been widely used for tackling the water scarcity world problem, due to its relatively low energy consumption compared to thermal-based methods such as multi-stage flash \cite{skuse2021can}, and its capability to use renewable or low-grade energy resources \cite{ahmed2021emerging,xu2020heat}. Currently, reverse osmosis leads this trend, being used in $69\%$ of industrial desalination plants around the world \cite{eke2020global}, as it is the most energy-efficient desalination process and its membrane technology is the most investigated, accounting for more than $60\%$ of articles concerning the most relevant membrane-based desalination techniques in 2019 \cite{lee2020fouling}. 
		
	Models associated with concentration polarization in reverse osmosis (RO) are usually based on the Navier-Stokes and convection-diffusion equations, although the Brinkman equations for porous media can also be considered. Variations in the approach of these equations for the formulation of concentration polarization equations have been widely discussed due to the difficulty in obtaining a numerical solution.  Geometric parameters (such as the length and thickness of the channel or the existence of spacers) and physical parameters (low density and high diffusivity) are associated with unstable behavior of the numerical methods proposed. This is usually improved by considering sufficiently fine meshes, at the cost of increasing the memory space required to run the algorithm, or using the SUPG method, which is capable of stabilize the scheme for convective dominated problems \cite{brooks1982streamline}. Although naturally different, the equations modeling RO have a clear similarity with the Boussinesq equations, and that is the coupling between fluid and convection-diffusion models. Behind the Boussinesq equations there is an extensive bibliography, analysis and modeling, where different configurations and environmental conditions are assumed in order to analyze their influence in the development of numerical methods for their solution (see \cite{allali2005priori,bernardi1995couplage,colmenares2016analysis,colmenares2017augmented} and the references therein). However, the consideration of a nonlinear boundary condition to model the water flow in the membrane makes the difference between both models. 
	
		A major difficulty in these systems is the boundary conditions, which can significantly complicate the study of a problem, especially if they are boundary conditions on vector functions. In the models associated with RO we can observe that there are boundary conditions, such as the permeability associated with the membrane in terms of the osmotic pressure, which present a nonlinear behavior. The numerical implementation always represents a challenge at the computational level because the numerical scheme selected to solve the problem must be adjusted to consider this condition. Now, several techniques have been proposed over the years to deal with boundary conditions on vector fieds, such as the slip condition in the Navier-Stokes equations, which has been implemented using penalty methods, Lagrange multipliers, or the Nitsche technique \cite{dione2015penalty,freund1995weakly, urquiza2014weak,verfurth1986finite,verfurth1991finite}.  

	The Nitsche technique was introduced in 1971 \cite{nitsche1971variationsprinzip} as an idea to impose Dirichlet conditions without the use of Lagrange multipliers. The scheme is similar to a mesh-size-dependent penalty method, but consistency terms are added for optimal convergence. Over the years, its use has been extended to interface problems, contact, and boundary conditions in general.
	
	At the discrete level, the majority of RO models is simulated with Finite Difference (FD) or Finite Volumes methods (FV), thus little has been researched in finite elements (FE) numerical algorithms applied to this kind of problems outside the preexisting commercial software, e.g. COMSOL \cite{picioreanu2009three,li2016three,gu2017effect,el2020numerical,perfilov2018general,parasyris2020mathematical} or ANSYS CFX \cite{liang2014cfd,weihs2014cfd,keshtkar2020novel}, where boundary implementation is done through a user interface or user defined macros. The coupling between velocity and concentration in RO models are treated in a iterative way by many CFD softwares using FE or FV. For example, we have the works of Wiley and Fletcher \cite{wiley2003techniques,alexiadis2007cfd,fletcher2004computational}, where they implemented a CFD model for fluid and transport processes. Here, it is observed that CFD software are  versatile when complex geometries are considered.  There is also the work of Salcedo-Diaz et al. [44], where the use of Comsol Multiphysics were use to  study the behavior in a three dimensional model of the coupled system, and a ﬂuid -dynamics and solute transfer numerical simulation that was studied through the Ansys software \cite{lyster2009coupled}. 
	
	Some of the ideas behind this CFD softwares uses the approach made in \cite{ma20042}, where we observe one of the first stabilized finite element approximations on a RO channel. Here, the membrane dimensions are taken into account, together with a SUPG stabilization for coercivity conservation in the convective dominated model of the coupling between Navier-Stokes and convection-diffusion equations. An alternative approach was studied in \cite{bernales2017prandtl} a numerical method is developed based on the Prandtl equations that couples laminar fluid and mass transport for the study of concentration polarization and osmotic effects in a membrane. The validation of the model is done by comparison with classical analytical solutions.
	
	In our study, the goal is to propose a finite element method using the Nitsche technique on an RO model. The difference with respect to other studies in the literature is that the Nitsche trick allow to impose the Darcy-Starling law by means of a consistent finite element method. This proposition is inspired by \cite{juntunen2009nitsche},  along with several studies on Nitsche's method on interface and contact problems \cite{chouly2013convergence,chouly2015nitsche1,chouly2015nitsche2}, where it is shown that Nitsche's technique can be applied to problems with general boundary conditions. The main contribution of this work is that the resulting discrete model is easy to implement because linearizing the model depends only on linearizing the flow. Furthermore, to the best of the authors' knowledge, this is the first time that the Nitsche technique is used to impose a nonlinear boundary condition on a cross-flow model that depends on the unknowns. The model is implemented with the open source software FEniCS \cite{AlnaesEtal2015}, and the results are validated with analytical solutions of simplified problems.
	
The structure of this article is as follows: in section 2 the assumptions for the studied system are stated, and the transport equations with the respective boundary conditions. The weak formulation for the continuous model is given in section 2.1. The discretization of the model using the Nitsche method is given in section 2.3. Then, the numerical results for various operating conditions and the discussion of the obtained results are presented in section 3.


	\section{Model problem}
	Consider the domain $\Omega=[0,L]\times[0,d]$, which consists of a channel with semi-permeable walls at $\Gamma_m$ and circular spacers at $\Gamma_w$, with flow input in $\Gamma_{\text{in}}$ and output in $\Gamma_{\text{out}}$ (see Figure \ref{fig:dibujo_canal}). The assumptions for this model are: 
	\begin{enumerate}
		\item The system is in steady state.
		\item The system is isothermal (therefore, the heat equation in the system is omitted).
		\item The fluid in the channel representing seawater is only composed of water and salt (sodium chloride). Therefore, the fouling effects on the membrane's performance due to scaling of less soluble salts or biofouling caused by bacteria proliferation, are ignored.
		\item The fluid is considered to be Newtonian and incompressible with density $\rho_0$. 
		\item The fluid has uniform viscosity $\mu_0$ and solute diffusivity through the solvent $D_{0}$.
		\item The channel membranes are porous in nature, and its behavior is described by Darcy-Starling Law.
		\item The osmotic pressure effect of the dissolved solute is represented by Van't Hoff's Law. 
		\item The effect of pressure drop inside the channel due to viscous effects on the permeate flux (Darcy-Starling Law) is negligible. Therefore, $\Delta P$ in equation \ref{model-eq9} can be considered constant. As an approximation, we will consider this valid for a channel whose length $L$ is such that the pressure loss due to viscous effects is less than $1\%$ the value of $\Delta P$.
		\item The membrane's performance does not become affected by wearing, so $I_{0}$ is constant.
	\end{enumerate}
	\begin{figure}[h!]
		\centering
	\includegraphics[scale=1.7]{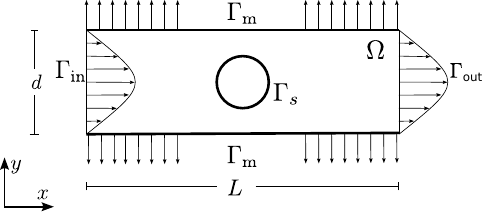}
	\caption{Model of cross-flow membrane filtration and concentration polarization.}
	\label{fig:dibujo_canal}
	\end{figure}
	
	For the described model, we will use a Navier-Stokes (based on assumptions 4 and 5) and a convection-diffussion model (based on assumptions 3 and 5), whose coupling is well-known as the Boussinesq equations. To make matters precise, we have
	
	\textit{The continuity condition:}
	\begin{equation}
		\div\,\bu=0, \text{ in }\Omega. \label{model-eq1}
	\end{equation}

	\textit{The Navier-Stokes equation}
	\begin{equation}
		\rho_0\bu\cdot\nabla\bu=-\nabla p +\mu_0\Delta\bu, \text{ in }\Omega.\label{model-eq2}\\
	\end{equation}
		
\textit{The Convection-Diffusion equation:}
\begin{equation}
	\bu\cdot\nabla \theta=D_0\Delta \theta, \text{ in }\Omega. \label{model-eq3}\\
\end{equation}
With respect to the boundary conditions,  we assume an inlet parabolic velocity profile with a fixed concentration to simulate a fully developed flow of seawater entering the channel, a normal momentum balance to set the pressure and a zero diffusive salt flux boundary condition in the outlet. Additionally, a zero fluid velocity, together with no salt penetration are assumed in the spacers or impermeable walls. This is summarized in the following equations
	\begin{align}	
	\bu\cdot\bn&=6u_{0}\frac{y}{d}(1-\frac{y}{d}),\;\theta=\theta_0,&\text{ on }\Gamma_{\text{in}},\label{model-eq4}\\
	\mu_0\frac{\partial\bu}{\partial\bn}-p\bn&=\boldsymbol{0},&\text{ on }\Gamma_{\text{out}},\label{model-eq5}\\
	-D_0(\nabla \theta)\cdot\bn&=0,&\text{ on }\Gamma_{\text{out}},\label{model-eq6}\\
	\bu&=\boldsymbol{0},&\text{ on }\Gamma_s,\\
	\theta(\bu\cdot \bn)-D_0\nabla \theta\cdot \bn&=0,&\text{ on }\Gamma_m,\Gamma_s, \label{model-eq7}\\
	\bu\cdot\boldsymbol{t}&=0,&\text{ on }\Gamma_m,\label{model-eq8}\\
	\bu\cdot\bn&=\frac{\Delta P - \kappa \theta}{I_0},&\text{ on }\Gamma_m.\label{model-eq9}
\end{align}
In the above, $\bu=(u_x,u_y)$, $p$, and $\theta$ represent the fluid velocity, pressure and molar concentration profile, respectively. Its worth noting that the pressure $p$ is the gauge pressure and not the absolute pressure $P$, as the assumption $4$ allows to write the Navier-Stokes equations in terms of $p$. The constant $D_0$ represents the solute diffusivity trough the solvent, $I_0$ is the membrane resistance, whereas $\kappa$ is a constant given by the Darcy-Starling law. $\Delta P$ represents the transmembrane pressure between the membrane and the ``outside" (the permeate channel).
	
	\subsection{Formulation}
	Assuming free flow conditions in $x=L$, we define the following spaces.
	$$
	\begin{aligned}
		&\H:=\{\bv\in H^1(\Omega)^2\;:\; \bv\cdot\bn=u_0 \text{ on }\Gamma_{\text{in}}\},\\
		&Y:=\{\bv\in H^1(\Omega)^2\;:\; \bv=\boldsymbol{0} \text{ on }\Gamma_{\text{in}}\cup\Gamma_s,\; \bv\cdot\boldsymbol{t}=\boldsymbol{0}\text{ on }\Gamma_m\}, \qquad Q=L^2(\Omega),\\
		&Z:=\{\tau\in H^1(\Omega)\;:\;  \tau=\theta_0 \text{ on }\Gamma_{\text{in}} \},\qquad		X:=\{\tau\in H^1(\Omega)\;:\;  \tau=0 \text{ on }\Gamma_{\text{in}} \}.
	\end{aligned}
	$$
	
	Testing \eqref{model-eq2} with $\bv\in Y$, integrating by parts and using the boundary conditions we obtain
	$$
	\begin{aligned}
		\int_\Omega \rho_0\left(\bu\cdot\nabla \bu\right)\cdot\bv&= \left[\int_\Omega p \div\bv-\int_{\partial\Omega} (p\bn)\cdot\bv\right] +\mu_0\left[-\int_\Omega\nabla\bu:\nabla \bv + \int_{\partial\Omega}\frac{\partial\bu}{\partial\bn}\cdot\bv\right]\\
		&= -\int_\Omega\mu_0\nabla\bu:\nabla \bv+\int_\Omega p \div\bv+ \int_{\Gamma_m}\bn^t\left(\mu_0\frac{\partial\bu}{\partial\bn}-p\bn\right)(\bv\cdot\bn)
	\end{aligned}
	$$
	
	Testing \eqref{model-eq3} with $\tau\in X$ and integrating by parts we obtain
	$$
	\begin{aligned}
	\int_\Omega (\bu\cdot\nabla \theta)\tau&=D_0\left[-\int_\Omega\nabla \theta\cdot\nabla \tau + \int_{\partial\Omega}\frac{\partial \theta}{\partial n}\tau\right]\\
	&=-\int_\Omega D_0\nabla \theta\cdot\nabla \tau + \int_{\Gamma_m\cup\Gamma_s}\theta(\bu\cdot\bn)\tau.
	\end{aligned}
	$$
	
Lets introduce the following forms: 
\begin{equation}
	\label{eq:formas_bilineales_continuas}
	\begin{aligned}
		&a(\bu,\bv):=\mu_0\int_\Omega \nabla\bu:\nabla\bv,\qquad\widetilde{a}(\bw,(\bu,\bv)):=\rho_0\int_\Omega (\bw\cdot\nabla\bu)\cdot\bv,\\
		&d(\theta,\tau):=D_0\int_\Omega \nabla \theta\cdot\nabla\tau, \qquad b(\bv,q):=-\int_\Omega q\div\bv\\
		 &\widetilde{c}(\bw,(\theta,\tau)):=\int_\Omega (\bw\cdot\nabla \theta)\tau-\int_{\Gamma_m\cup\Gamma_s}\theta\left(\bw\cdot \bn\right)\tau.
	\end{aligned}
\end{equation}

Hence, we have the following formulation for the proposed cross-flow model: Find $(\bu,p,\theta)\in \H\times Q\times Z$ such that \eqref{model-eq9} holds, and
\begin{equation}
	\label{eq:cross-flow-fv1}
	\begin{aligned}
		a(\bu,\bv)+b(\bv,p) + \widetilde{a}(\bu,(\bu,\bv)) - \int_{\Gamma_m}\bn^t\left(\mu_0\frac{\partial\bu}{\partial\bn}-p\bn\right)(\bv\cdot\bn)&=0,&\forall \bv\in Y,\\
		b(\bu,q)&=0,&\forall q\in Q,\\
		d(\theta,\tau) + \widetilde{c}(\bu,(\theta,\tau))&=0,&\forall \tau\in X.
	\end{aligned}
\end{equation}
Note that if homogeneous boundary conditions for both $\bu$ and $\theta$ are considered in $\Gamma_{m}$, then we fall into a typical formulation of Boussinesq equations. For an analysis on viscous flows around porous with non-homogeneous boundary data, we refer to \cite{ingram2011finite}. On the other hand, note that the nonlinearity inherited by the boundary condition \eqref{model-eq7} is present in the trilinear form $\widetilde{c}$. 
For the bilinear form $b(\cdot,\cdot)$ there holds \cite[Theorem 3.7]{girault1979finite}
\begin{equation}
	\label{eq:inf-sup-continua-b}
	\sup_{\bv\in Y,\bv\neq\boldsymbol{0}}\frac{b(\bv,q)}{\Vert\bv\Vert_{1,\Omega}}\geq\beta\Vert q\Vert_{0,\Omega},\qquad q\in W.
\end{equation}
\subsection{Nitsche method}
Let us consider a shape-regular family of partitions of $\O$, denoted by $\{\mathcal{T}_h\}_{h>0}$.  Let $h_T$ be the diameter of a triangle $T\in\CT_h$ and let us define $h:=\max\{h_T\,:\, T\in \CT_h\}$.

One of the main difficulties in the discrete solution of problems involving the Navier-Stokes equations is the choice of elements satisfying the discrete version of \eqref{eq:inf-sup-continua-b}. Of all the possible existing options, we choose the Taylor-Hood elements \cite{boffi1994stability,boffi1997three}, namely, given $k\geq 1$, the finite element spaces to develop the numerical scheme for the velocity and pressure are the following 
\begin{align*}
	\mathbf{H}_h&:=\{\bv_h\in C(\Omega)^2\,:\, \bv_h|_T\in\mathbb{P}_{k+1}(T)^2\quad\forall T\in\mathcal{T}_h\}\cap Y,\\
	Q_h&:=\{q_h\in C(\Omega)\,:\, q_h|_T\in\mathbb{P}_{k}(T)\quad\forall T\in\mathcal{T}_h\}.
\end{align*}
Another widely used and computationally very economical option is the lowest order mini-element pair $(\mathbf{H}_h,Q_h)$, where $Q_h$ is as above with $k=1$, and $\mathbf{H}_h$ is given by
$$
	\mathbf{H}_h:=\{\bv_h\in C(\Omega)^2\,:\, \bv_h|_T\in\mathbb{P}_{1,b}(T)^2\quad\forall T\in\mathcal{T}_h\}\cap Y,
$$ 
where $\mathbb{P}_{1,b}$ are the bubble-enriched piecewise continuous elements defined with the barycentric coordinates of each element $T$ (see, e.g.  \cite{ern2004applied,girault1979finite}).

In turn, for $\ell\geq 1$, the finite element space for the discrete concentration is just
$$
M_h:=\{\tau_h\in C(\Omega)\,:\, \tau_h|_T\in\mathbb{P}_{\ell}(T)\quad\forall T\in\mathcal{T}_h\}\cap X.
$$
It is important to mention that condition \eqref{model-eq9} is not imposed in throughout the integration by parts. Hence, we use the Nitsche's method, whose goal is to relax and
penalize Dirichlet boundary conditions such that they are satisfied in the limit of small mesh size. In this study, we will use this technique in order to impose the boundary condition \eqref{model-eq9}, which characterizes the pressure difference between the interior of the channel (feed) and the outside (permeate). In order to propose our numerical scheme using the Nitsche method, we introduce the  following bilinear forms 
\begin{multline}
	A((\bu_h,p_h),(\bv_h,q_h)):=
	a(\bu_h,\bv_h)+b(\bv_h,p_h)+ b(\bu_h,q_h) \\
	-\int_{\Gamma_m}\bn^t\left(\mu_0\frac{\partial\bu_h}{\partial\bn}-p_h\bn\right)(\bv_h\cdot\bn) - \int_{\Gamma_m}\bn^t\left(\mu_0\frac{\partial\bv_h}{\partial\bn}-q_h\bn\right) \left(\bu_h\cdot\bn \right)\\
	+\alpha\int_{\Gamma_m}h^{-1}(\bu_h\cdot\bn)(\bv_h\cdot\bn),\\
	F(\bv_h,q_h):=\alpha\int_{\Gamma_m} h^{-1}\left(\frac{\Delta P }{I_0}\right)(\bv_h\cdot\bn)-\int_{\Gamma_m}\bn^t\left(\mu_0\frac{\partial\bv_h}{\partial\bn}-q_h\bn\right)\left(\frac{\Delta P }{I_0}\right)\\
	B(\theta_h,(\bv_h,q_h)):=\alpha\int_{\Gamma_m} h^{-1}\left(\frac{\kappa \theta_h}{I_0}\right)(\bv_h\cdot\bn)-\int_{\Gamma_m}\bn^t\left(\mu_0\frac{\partial\bv_h}{\partial\bn}-q_h\bn\right)\left(\frac{\kappa \theta_h}{I_0}\right),
\end{multline}
where $h$ represents the local mesh size. The parameter $\alpha>0$ is a constant that has to be chosen sufficiently large. With this forms at hand, we propose the following Nitsche method: Find $(\bu_h,p_h,\theta_h)\in \mathbf{H}_h+u_{0,I}\times Q_h\times M_h+\theta_{0,I}$ such that
\begin{equation}
	\label{eq:cross-flow-fv1-nitsche}
	\begin{aligned}
		A((\bu_h,p_h),(\bv_h,q_h))+ \widetilde{a}(\bu_h,(\bu_h,\bv_h)) +B(\theta_h,(\bv_h,q_h))&=F(\bv_h,q_h),\\
		d(\theta_h,\tau_h) + \widetilde{c}(\tau_h,(\bu_h,\theta_h))&=0,
	\end{aligned}
\end{equation}
for all $(\bv_h,q_h)\in \mathbf{H}_h\times Q_h$ and for all $\forall \tau_h\in M_h$, where $u_{0,I}$ and $\theta_{0,I}$ are suitable interpolants of $u_0$ and $\theta_0$.

Below we prove that \eqref{eq:cross-flow-fv1-nitsche} contains all the elements to be consistent.
\begin{lemma}
	If $(\bu,p,\theta)$ is the solution of \eqref{model-eq1}-\eqref{model-eq3} together with boundary conditions \eqref{model-eq6}-\eqref{model-eq9}, then $(\bu,p,\theta)$ solves \eqref{eq:cross-flow-fv1-nitsche}. 
\end{lemma}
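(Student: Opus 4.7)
The plan is to substitute the smooth solution $(\bu,p,\theta)$ of \eqref{model-eq1}--\eqref{model-eq3}/\eqref{model-eq6}--\eqref{model-eq9} into the forms appearing in \eqref{eq:cross-flow-fv1-nitsche} and reduce the resulting identity to the continuous weak formulation \eqref{eq:cross-flow-fv1}, which $(\bu,p,\theta)$ already satisfies by construction. The key structural fact I will exploit is that the Nitsche consistency and penalty terms were designed so that their integrands carry the factor $\bu\cdot\bn-(\Delta P-\kappa\theta)/I_0$, which vanishes pointwise on $\Gamma_m$ by \eqref{model-eq9}.

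First I would expand $A((\bu,p),(\bv,q))+B(\theta,(\bv,q))-F(\bv,q)$ and collect the terms supported on $\Gamma_m$. Beyond the consistency integral $-\int_{\Gamma_m}\bn^t\bigl(\mu_0\frac{\partial\bu}{\partial\bn}-p\bn\bigr)(\bv\cdot\bn)$ already present in \eqref{eq:cross-flow-fv1}, the remaining boundary pieces coming from $B$ and $F$, together with the symmetric Nitsche term and the penalty term from $A$, regroup as
$$
-\int_{\Gamma_m}\bn^t\!\left(\mu_0\frac{\partial\bv}{\partial\bn}-q\bn\right)\!\left[\bu\cdot\bn-\frac{\Delta P-\kappa\theta}{I_0}\right]+\alpha\int_{\Gamma_m} h^{-1}\!\left[\bu\cdot\bn-\frac{\Delta P-\kappa\theta}{I_0}\right](\bv\cdot\bn).
$$
Applying \eqref{model-eq9} pointwise makes both brackets vanish, so both integrals drop out. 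What is left in the momentum line is exactly the first equation of \eqref{eq:cross-flow-fv1}, which holds by the integration-by-parts computation carried out in Section~2.1.

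For the remaining two equations, the mass balance $b(\bu,q)=0$ is immediate from $\div\bu=0$ in \eqref{model-eq1}, and since no Nitsche modification is applied to the convection--diffusion line, the second equation of \eqref{eq:cross-flow-fv1-nitsche} coincides with the third line of \eqref{eq:cross-flow-fv1}, which in turn follows by integrating \eqref{model-eq3} by parts and using the no-flux relation \eqref{model-eq7} on $\Gamma_m\cup\Gamma_s$.

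I do not anticipate any deep obstacle: the argument is bookkeeping, and the lemma is really a statement that the Nitsche scheme has been built consistently. The only delicate point will be matching signs and the normal factor $\bn^t(\cdot)(\bv\cdot\bn)$ so that the three boundary pieces coming from $A$, $B$, and $F$ really coalesce into the single bracket $\bu\cdot\bn-(\Delta P-\kappa\theta)/I_0$; once this rearrangement is verified, \eqref{model-eq9} closes the proof.
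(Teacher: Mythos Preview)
Your argument is correct and is essentially the paper's own proof, just run in the opposite direction: the paper multiplies \eqref{model-eq9} by $\alpha h^{-1}(\bv_h\cdot\bn)$ and by $\bn^t(\mu_0\partial\bv_h/\partial\bn-q_h\bn)$ to produce two vanishing identities which it then adds to the weak form \eqref{eq:cross-flow-fv1}, whereas you start from \eqref{eq:cross-flow-fv1-nitsche} and subtract to exhibit the common factor $\bu\cdot\bn-(\Delta P-\kappa\theta)/I_0$. The key observation and the level of detail are the same.
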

\begin{proof}
	We start by multiplying \eqref{model-eq9} by $\bv_h\cdot\bn$, with $\bv_h\in \mathbf{H}_h$, integrate over $\Gamma_m$, and multiply by $\alpha h^{-1}$ in order to have
	\begin{equation}
		\label{eq:lemma-consistencia-1}
		\alpha \int_{\Gamma_m}h^{-1}(\bu\cdot\bn)(\bv_h\cdot\bn)=\alpha \int_{\Gamma_m}h^{-1}\left(\frac{\Delta P - \kappa \theta}{I_0}\right)(\bv_h\cdot \bn).
	\end{equation}
	Similarly, we now multiply $\eqref{model-eq9}$ by $\bn^t\left(\mu_0\frac{\partial\bv_h}{\partial\bn}-q_h\bn\right)$, where $q_h\in Q_h$, to obtain
	\begin{equation}
		\label{eq:lemma-consistencia-2}
		\int_{\Gamma_m}\bn^t\left(\mu_0\frac{\partial\bv_h}{\partial\bn}-q_h\bn\right)(\bu\cdot\bn)=\int_{\Gamma_m}\bn^t\left(\mu_0\frac{\partial\bv_h}{\partial\bn}-q_h\bn\right)\left(\frac{\Delta P - \kappa \theta}{I_0}\right).
	\end{equation}
	Note that multiplying \eqref{model-eq1}-\eqref{model-eq3} with $(\bv_h,q_h,\tau_h)\in \mathbf{H}_h\times Q_h \times M_h $, integrating by parts and using boundary conditions \eqref{model-eq6}-\eqref{model-eq8} we have that
	\begin{equation}
		\label{eq:lema-consistencia}
		\begin{aligned}
			a(\bu,\bv_h)+b(\bv_h,p) + \widetilde{a}(\bu,(\bu,\bv_h)) - \int_{\Gamma_m}\bn^t\left(\mu_0\frac{\partial\bu}{\partial\bn}-p\bn\right)(\bv_h\cdot\bn)&=0,\\
			b(\bu,q_h)&=0,\\
			d(\theta,\tau_h) + \widetilde{c}(\bu,(\theta,\tau_h))&=0.
		\end{aligned}
	\end{equation}
	Then, by adding \eqref{eq:lemma-consistencia-1}, \eqref{eq:lemma-consistencia-2} and \eqref{eq:lema-consistencia} we obtain that
	$$
		\begin{aligned}
			A((\bu,p),(\bv_h,q_h))+ \widetilde{a}(\bu,(\bu,\bv_h)) +B(\theta,(\bv_h,q_h))&=F(\bv_h,q_h),\\
			d(\theta,\tau_h) + \widetilde{c}(\tau_h,(\bu,\theta))&=0,
		\end{aligned}
	$$
	for all $(\bv_h,q_h)\in \mathbf{H}_h\times Q_h$ and for all $\forall \tau_h\in M_h$.
	The proof is complete.
\end{proof}
The above lemma gives the modified variational formulation using the Nitsche method, and its consistency. Note that the nonlinearity is present in the convective terms and the boundary term $\int_{\Gamma_m\cup\Gamma_s}\theta
(\bu\cdot\bn)\tau$, where the nonlinearity in the equations is caused only by the velocity. Hence, the system \eqref{eq:cross-flow-fv1-nitsche} can be rearranged such that a fixed point iteration technique is straightforward.

%

\section{Numerical experiments: validation and discussion}
In this section we will show some numerical results in different channel configurations. The implementation of the scheme is perfomed in FEniCS, and the meshes were created using Gmsh. The numerical experiments have been developed with Taylor-Hood elements. However, similar results were obtained using the mini-element. For the concentration case, it was sufficient to consider the lowest case with $\ell=1$. All the cases consider the Nitsche constant $\alpha=1$.

We consider the geometrical parameters of a common membrane channel ``unit" whose length is defined by a subsection of the channel that allows a fully developed flow \cite{luo2020hybrid}. The channel thickness and spacer diameter is that of a module with standard $28\,mil$ gap spacer mesh \cite{kucera2015reverse}. For the channel width $W$ (perpendicular to the simulation domain) we will consider $W=L$. More precisely, we have
$$
L=1.5\cdot 10^{-2}\,m,\quad d=7.4\cdot 10^{-4}\,m,\qquad W=1.5\cdot 10^{-2}\,m,\qquad d_S=3.6\cdot 10^{-4}\,m,
$$
while the physical parameters \cite{bernales2017prandtl,nayar2016thermophysical} are taken as
$$
\begin{aligned}
	&\kappa=4955.144\,Pa, \quad I_0=8.41\cdot10^{10} Pa\,\cdot s/m,\quad \mu_0=8.9\cdot10^{-4}kg\,m^{-1}s^{-1},\\
	&\rho_0=1027.2\,kg/m^3,\quad D_0=1.5\cdot10^{-9} m^2/s.
\end{aligned}
$$
With respect to the boundary data, we will consider combinations among the following:
$$
\begin{aligned}
	&u_0= 1.29\times k\cdot10^{-1}m/s, \quad k=0.5,1,2.\\
	&C_0= 600 \text{ mol }m^{3}.\\
	&\Delta P = 4053000\,Pa\text{ or }5572875\,Pa.
\end{aligned}
$$
As suggested by several studies, all the experiments use a highly refined mesh close to the membrane $\Gamma_m$.

\subsection{Validation test}
This experiment aims to test the convergence of our method. To this end, we consider a channel with no spacers, i.e., we set $\Gamma_s=\emptyset$. The domain $\Omega=(0,L)\times(0,d)$ is meshed with uniform elements such that $n_x=(L/d)n_y$, where $n_x$ and $n_y$ denote the number of cells in the $x$ and $y$ directions, respectively. We divide this test in two cases, that we detail below.

\subsubsection{Case 1} The first case consists in validating the method by comparing the pressure drop with those of classical analytical models of momentum, namely, Poiseuille and Berman flow models. These two models have the advantage that the pressure drop, denoted by $\Delta p(x,d/2):=p(0,d/2)-p(x,d/2)$, can be obtained by solving the equations of motion with boundary conditions
$$
\bu\cdot \bn=\frac{\Delta P}{I_0},\quad\bu\cdot \bn=0, 
$$
on $\Gamma_m$, for Berman and Poiseuille flow, respectively. Note that, for the Poiseuille flow, we have a no-slip and no-penetration boundary condition (Dirichlet boundary condition) that can be imposed strongly on the space. Hence, we have the following pressure drop equation for a constant permeable wall
$$
	\Delta p(x,d/2):=\left(\frac{1}{2}\rho_0u_0^2\right)\left(\frac{24}{Re}-\frac{648}{35}\frac{Re_{\bn}}{Re}\right)\left(1-\frac{2Re_{\bn}}{Re}\frac{x}{\widetilde{d}}\right)\left(\frac{x}{\widetilde{d}}\right),
$$
where a constant permeate velocity $\bu\cdot\bn$ is assumed. On the other hand, the case of impermeable walls assumes that $Re_{\bn}=0$, so the pressure drop equation is reduced to
$$\Delta p(x,d/2):=\left(\frac{1}{2}\rho_0u_0^2\right)\left(\frac{24}{Re}\right)\left(\frac{x}{\widetilde{d}}\right).$$
In the above equations, $Re:=\frac{4\rho_0\widetilde{d}u_0}{\mu_0}$ is the cross flow Reynolds number, whereas $Re_{\bn}=\frac{\rho_0\tilde{d}(\bu\cdot\bn) }{\mu_0}$ is the Reynolds number at the channel walls. The parameter $\widetilde{d}$ denotes half of the channel height ($\widetilde{d}=d/2$). 

In Figure \ref{fig:comp1} we observe the exact pressure drop compared with our computed pressure drop, denoted by $\Delta p_h(x,d/2)$. To compare with our approximate solution, we have selected $10$ equally spaced pressure drop values in the channel. We note that the proposed method agrees perfectly with the analytical axial pressure drop for both, the permeable and impermeable channel. Also, a similar pressure drop was observed for $\Delta P =4053000$ Pa and $\Delta P=5572875$ Pa since they correspond to $\bu\cdot\bn\approx4.819\cdot 10^{-5}$ m/s and $\bu\cdot\bn\approx6.626\cdot10^{-5}$ m/s, respectively. Therefore, $\Delta P$ has little impact on the pressure drop on the channel.

\begin{figure}[!ht]
\centering
\includegraphics[scale=0.25]{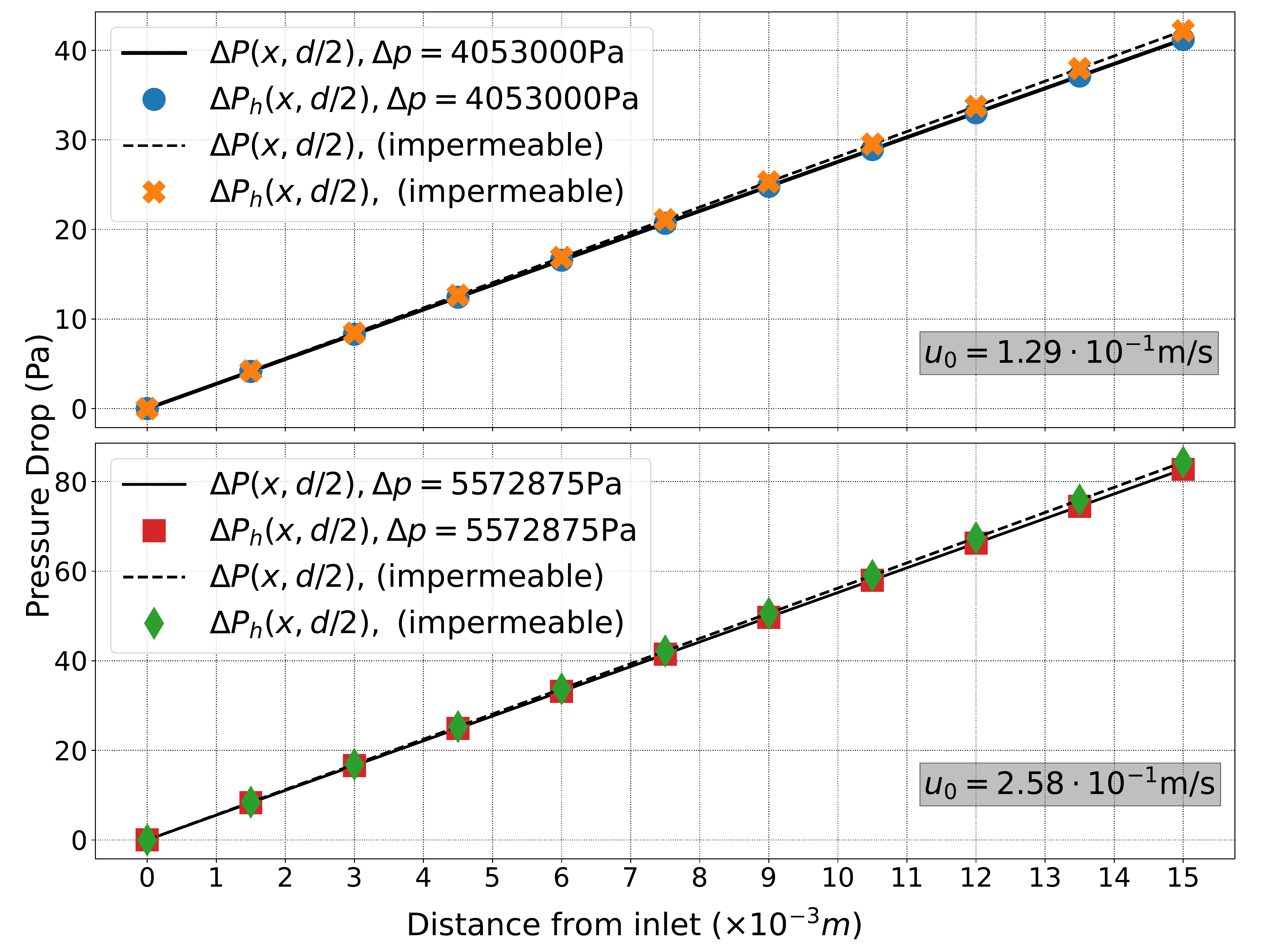}
\caption{Test 1. Comparison of the exact and approximate pressure drop.}
\label{fig:comp1}
\end{figure}
\subsubsection{Case 2} We now aim to test the method when the non-linear boundary condition \eqref{model-eq9} is considered. To this end, we compute the total flow (see equation \eqref{eq:totmflow}) over the membrane channel in order to observe the stability of the result for different number of elements:
\begin{equation}
\label{eq:totmflow}
\dot{m}_{tot}=\rho W\int_{\Gamma_{m}}\abs{\boldsymbol{u}_{y}(s)}ds.
\end{equation}
The importance of this test lies in the determination of the number of elements necessary  to obtain accurate results. Some studies suggest that refinement towards the membrane are required for better results \cite{li2016three,luo2020hybrid}. A total of four refinement near $\Gamma_m$ have been used. We set $N=2j,\, j=1,\ldots,19,$ for uniform refinement, whereas $N=5j$ is used for the meshing towards $\Gamma_m$.

In Figure \ref{fig:comp2} we observe the average flux for several inlet velocities and osmotic pressure when using uniform refinements. In both cases, it is observed that from $5\cdot10^3$ elements the total mass flow is stable. All the simulations in this work were tested in a similar manner, preferring the meshing towards $\Gamma_m$ with a variable amount of elements, but always above $10^4$ cells.
\begin{figure}[!ht]
	\centering
	\includegraphics[scale=0.28]{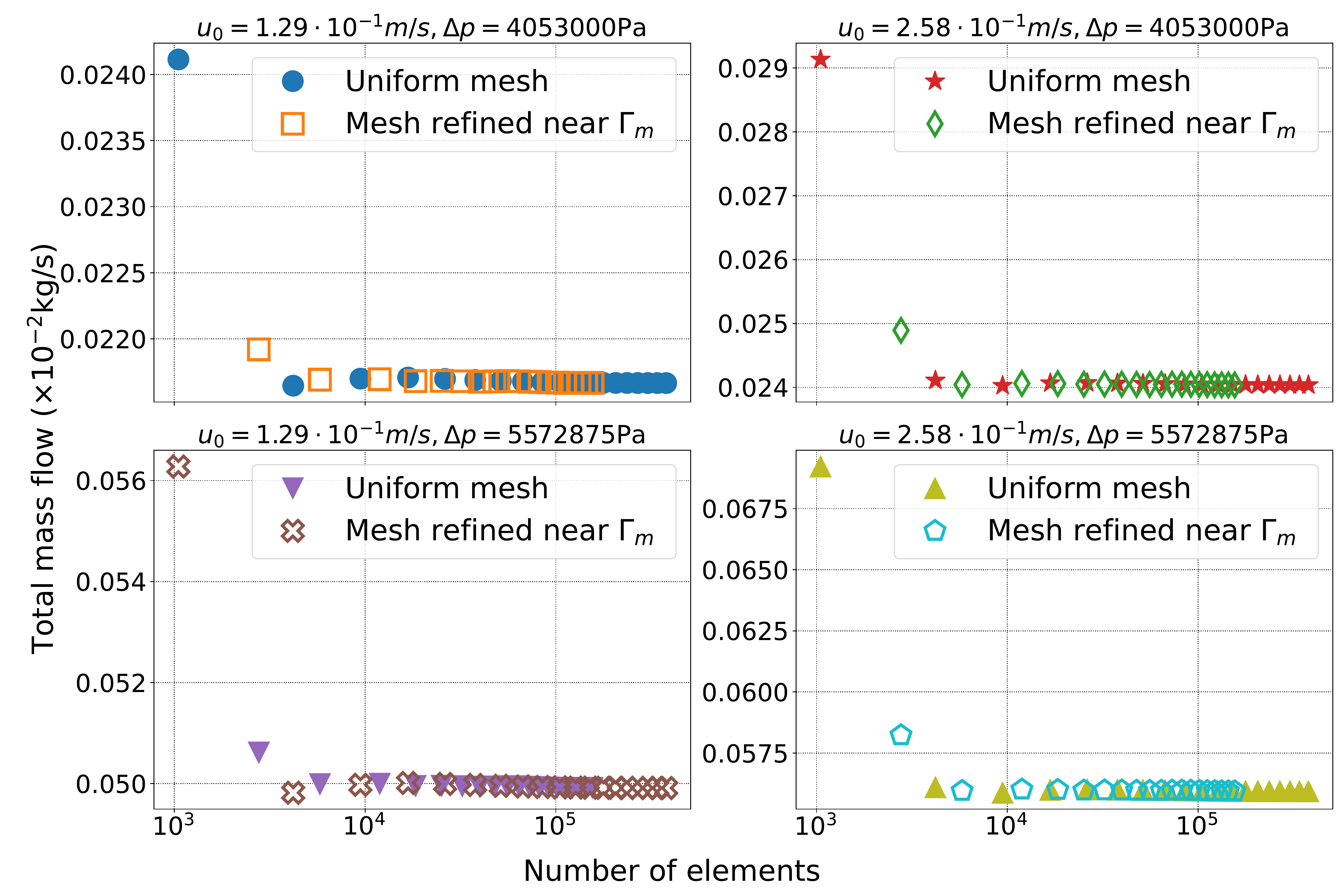}
	\caption{Test 1. Comparison of the stability in the total mass flow computation stability when using uniform refinement and meshes refined towards $\Gamma_m$.}
	\label{fig:comp2}
\end{figure}
\subsection{Test 2. Channel with different spacer configurations}
In this section we will depict the velocity fields, pressure distribution and concentration polarization along a channel with different spacer configurations. Discussion of the simulations are presented in Section \ref{sec:discusion} below.
\subsubsection{Spacers in cavity configuration}
The numerical results for the spacers in cavity configuration are presented in Figures \ref{fig:cavity1}-\ref{fig:cavity3}. 
\begin{figure}[!ht]
	\centering
		\begin{minipage}{1\linewidth}
		\centering
		\footnotesize{$u_0=6.45\cdot10^{-2} m/s,\Delta P =4053000Pa $}
		\includegraphics[trim= 0cm 35cm 0cm 27cm,clip, scale=0.1]{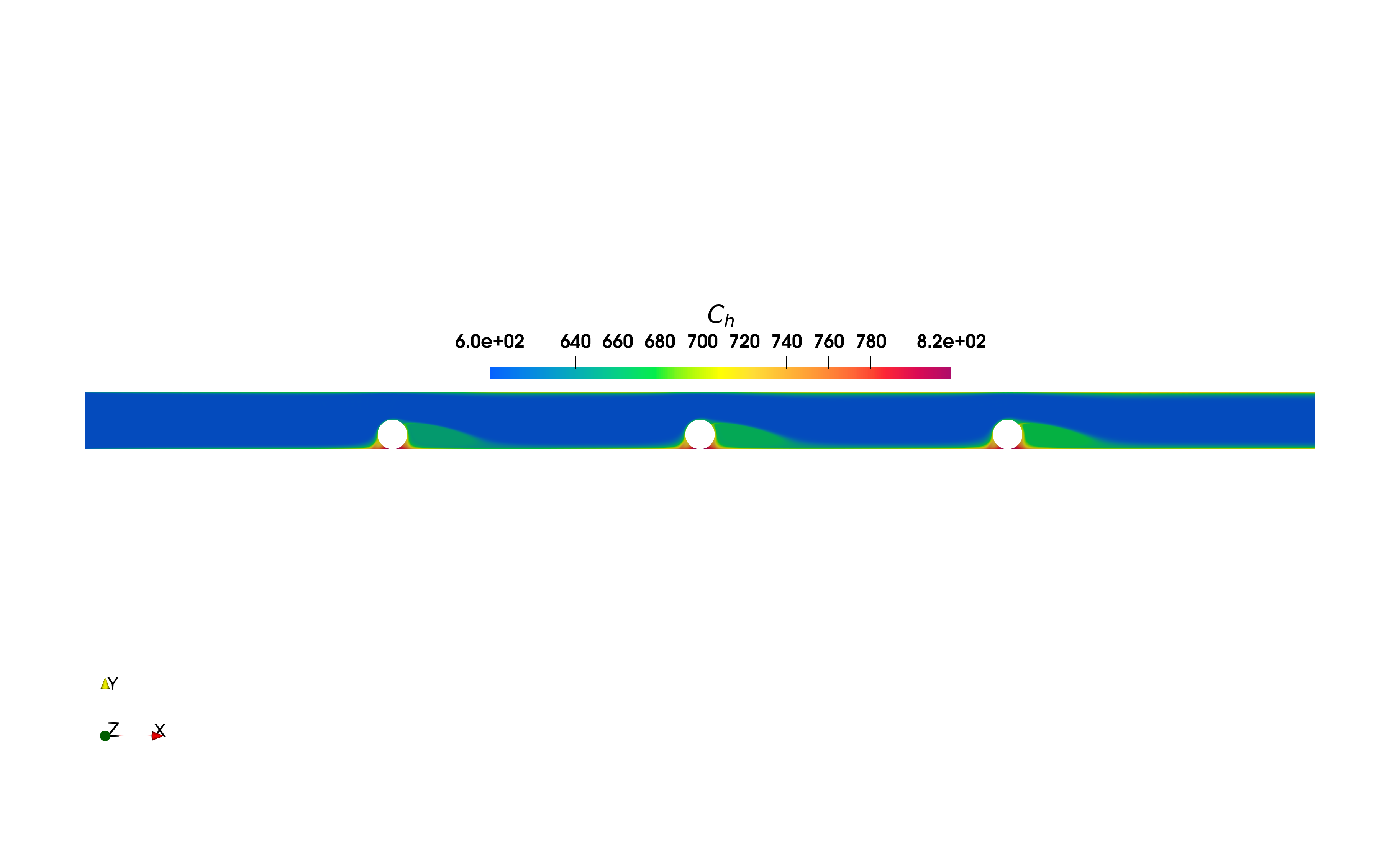}
		
	\end{minipage}
	\begin{minipage}{1\linewidth}
		\centering
		\footnotesize{$u_0=2.58\cdot10^{-1}m/s,\Delta P =4053000Pa $}
		\includegraphics[trim= 0cm 35cm 0cm 27cm,clip, scale=0.1]{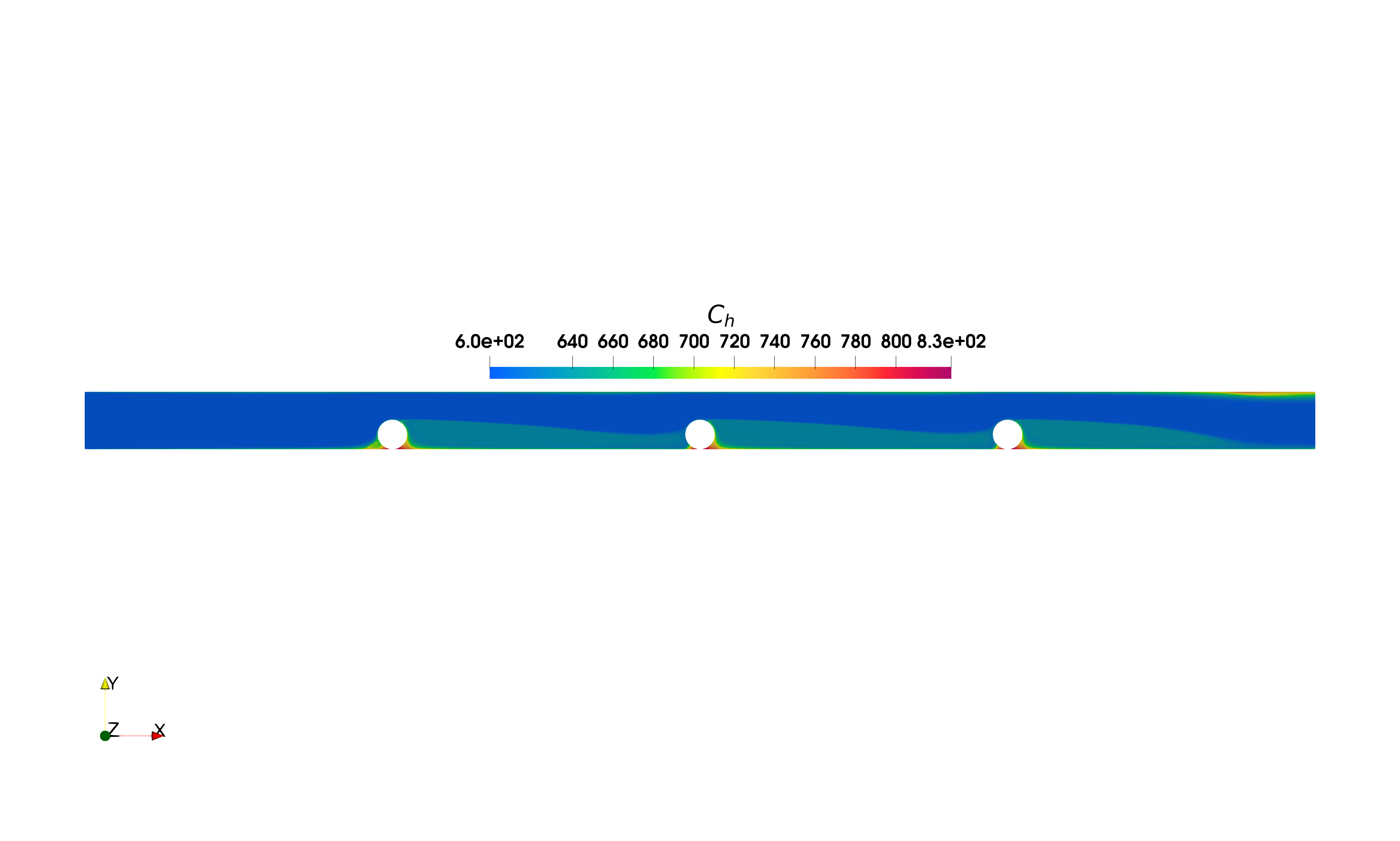}
		
	\end{minipage}
	\begin{minipage}{1\linewidth}
		\centering
		\footnotesize{$u_0=6.45\cdot10^{-2} m/s,\Delta P =5572875Pa $}
		\includegraphics[trim= 0cm 35cm 0cm 27cm,clip, scale=0.1]{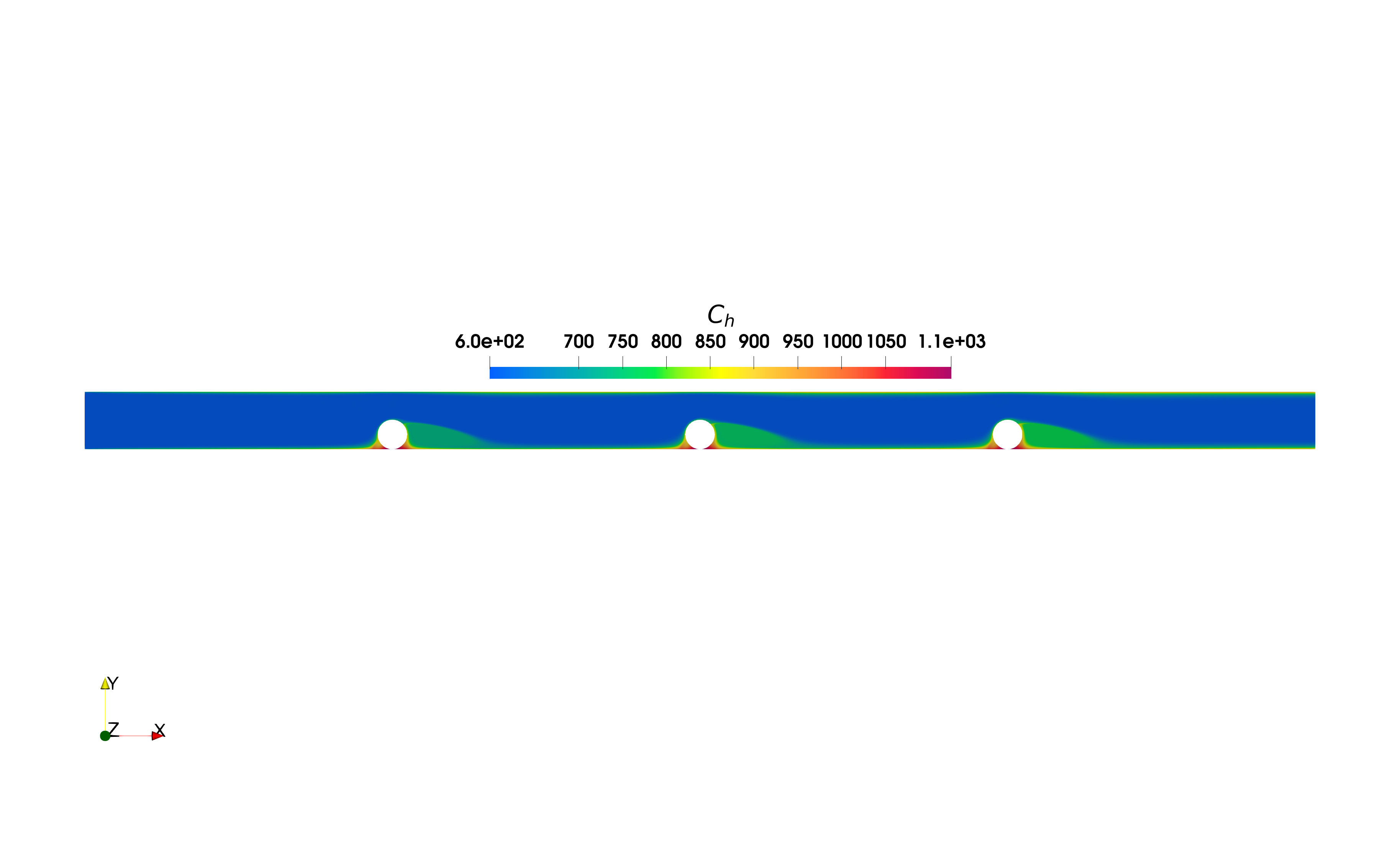}
		
	\end{minipage}
	\begin{minipage}{1\linewidth}
		\centering
		\footnotesize{$u_0=2.58\cdot10^{-1}m/s,\Delta P =5572875Pa $}
		\includegraphics[trim= 0cm 35cm 0cm 27cm,clip, scale=0.1]{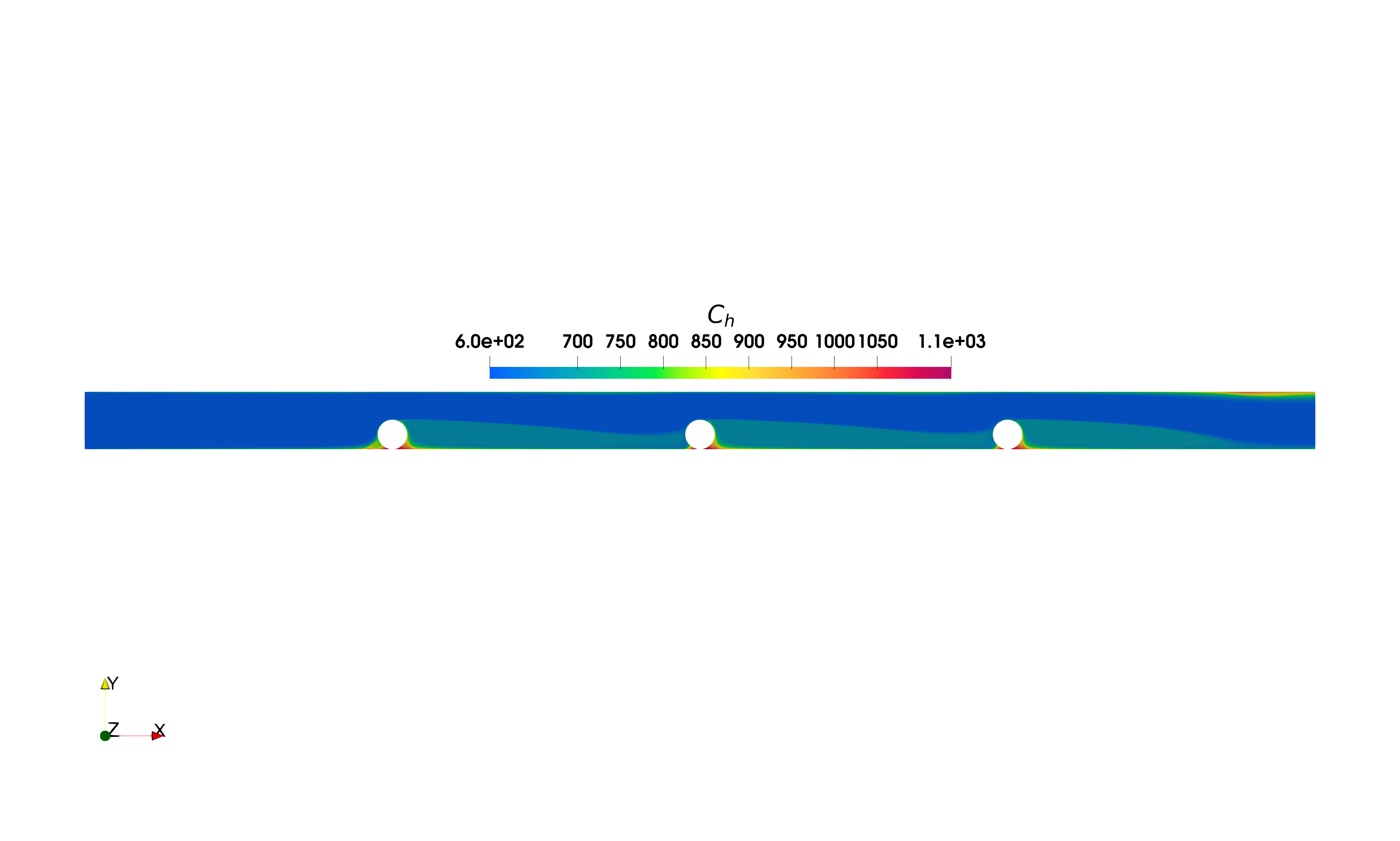}
		
	\end{minipage}
\caption{Test 2. Comparison of concentration levels using a cavity spacers configuration.}
\label{fig:cavity1}
\end{figure}

\begin{figure}[!ht]
	\centering
	\footnotesize{$u_0=6.45\cdot10^{-2} m/s,\Delta P =4053000Pa $}
	\begin{minipage}{1\linewidth}
		\centering
		\includegraphics[trim= 2.5cm 8cm 1.5cm 10cm,clip, scale=0.48]{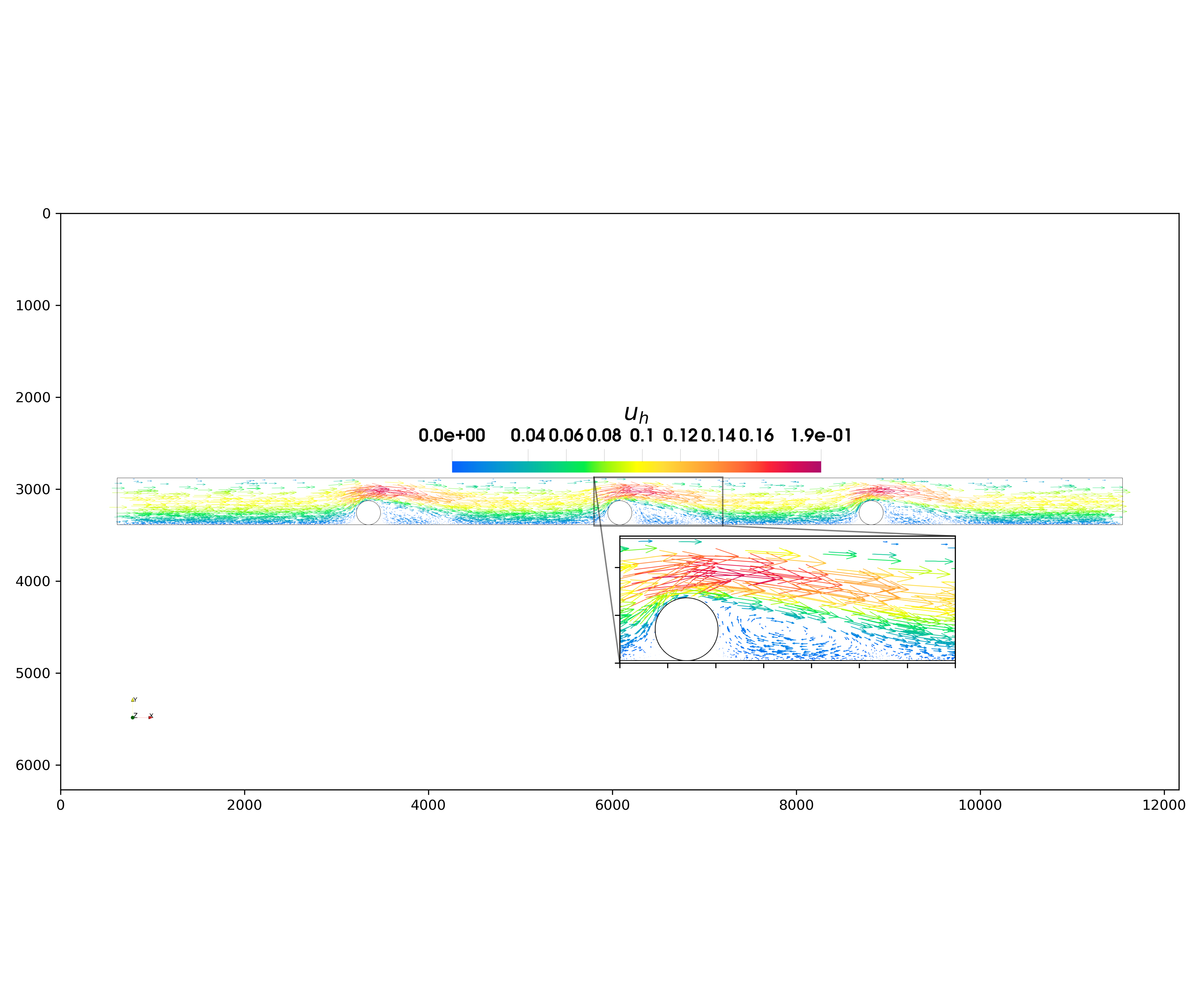}
		
	\end{minipage}
	\begin{minipage}{1\linewidth}
		\centering
		\footnotesize{$u_0=2.58\cdot10^{-1}m/s,\Delta P =4053000Pa $}
		\includegraphics[trim= 2.5cm 8cm 1.5cm 10cm,clip, scale=0.48]{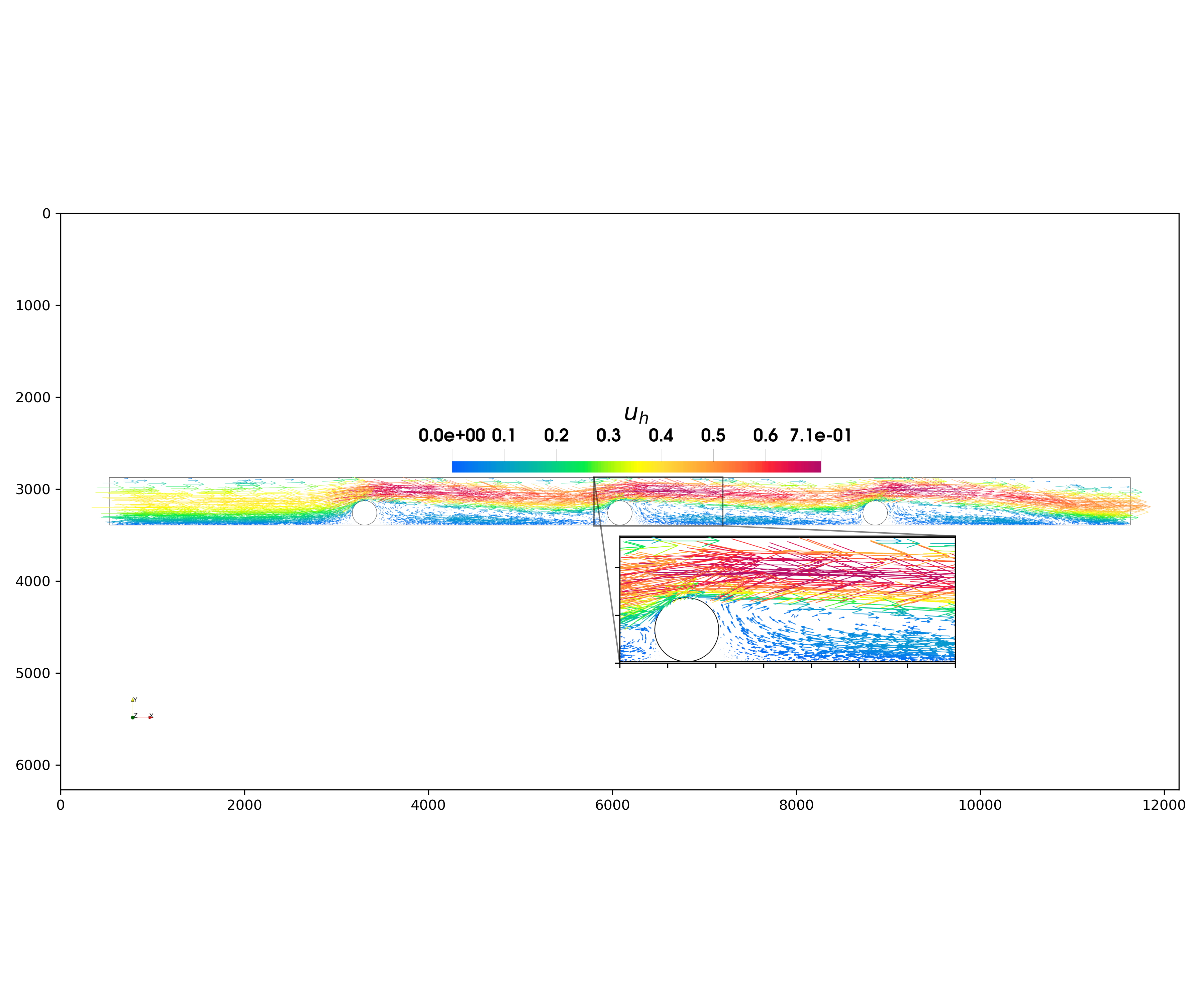}
	\end{minipage}
	\caption{Test 2. Comparison of velocity fields using a cavity spacers configuration.}
\label{fig:cavity2}
\end{figure}

\begin{figure}[!ht]
	\centering
	\begin{minipage}{1\linewidth}
		\centering
		\footnotesize{$u_0=6.45\cdot10^{-2} m/s,\Delta P =4053000Pa $}
		\includegraphics[trim= 0cm 35cm 0cm 27cm,clip, scale=0.1]{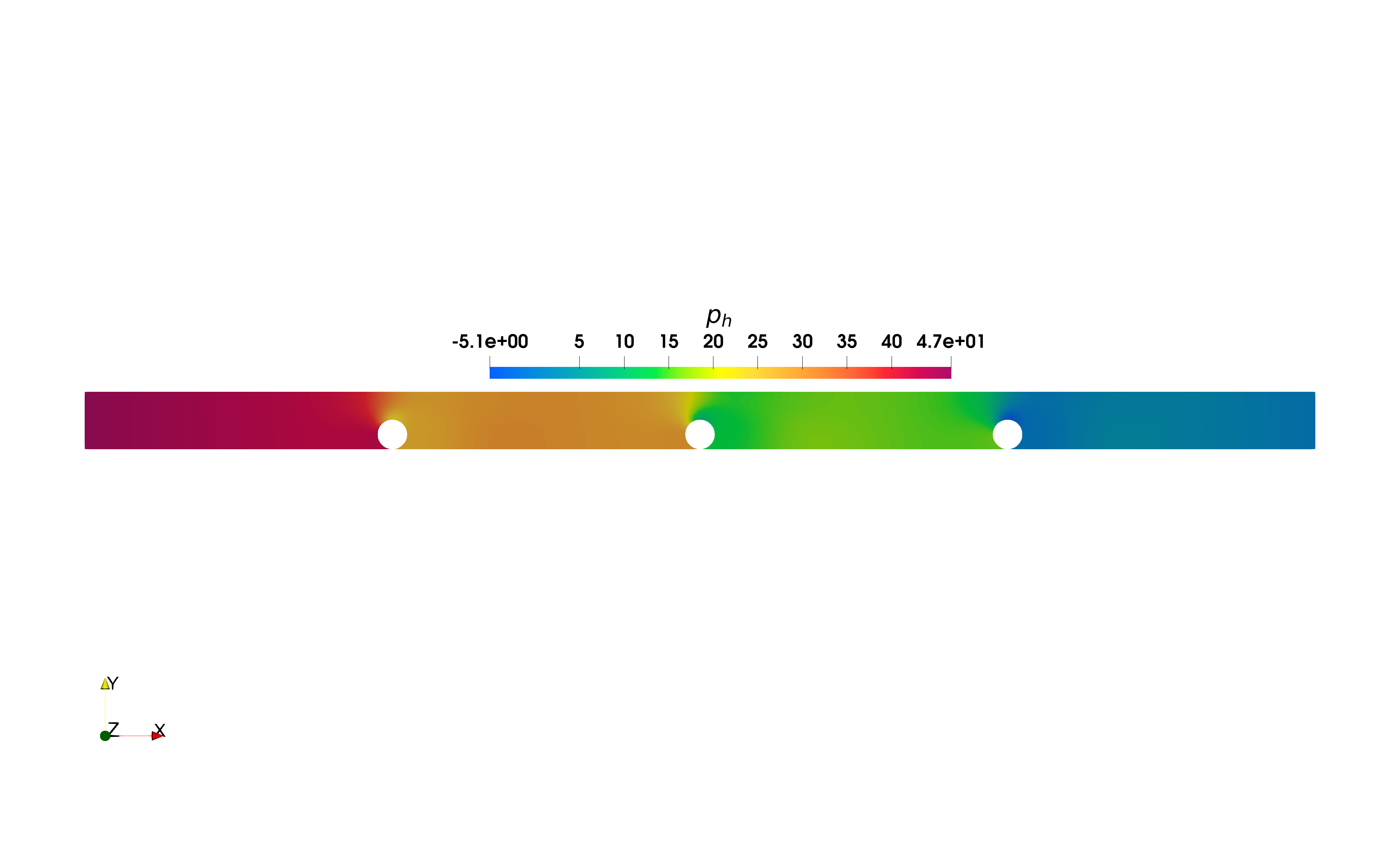}
		
	\end{minipage}
	\begin{minipage}{1\linewidth}
		\centering
		\footnotesize{$u_0=2.58\cdot10^{-1}m/s,\Delta P =4053000Pa $}
		\includegraphics[trim= 0cm 35cm 0cm 27cm,clip, scale=0.1]{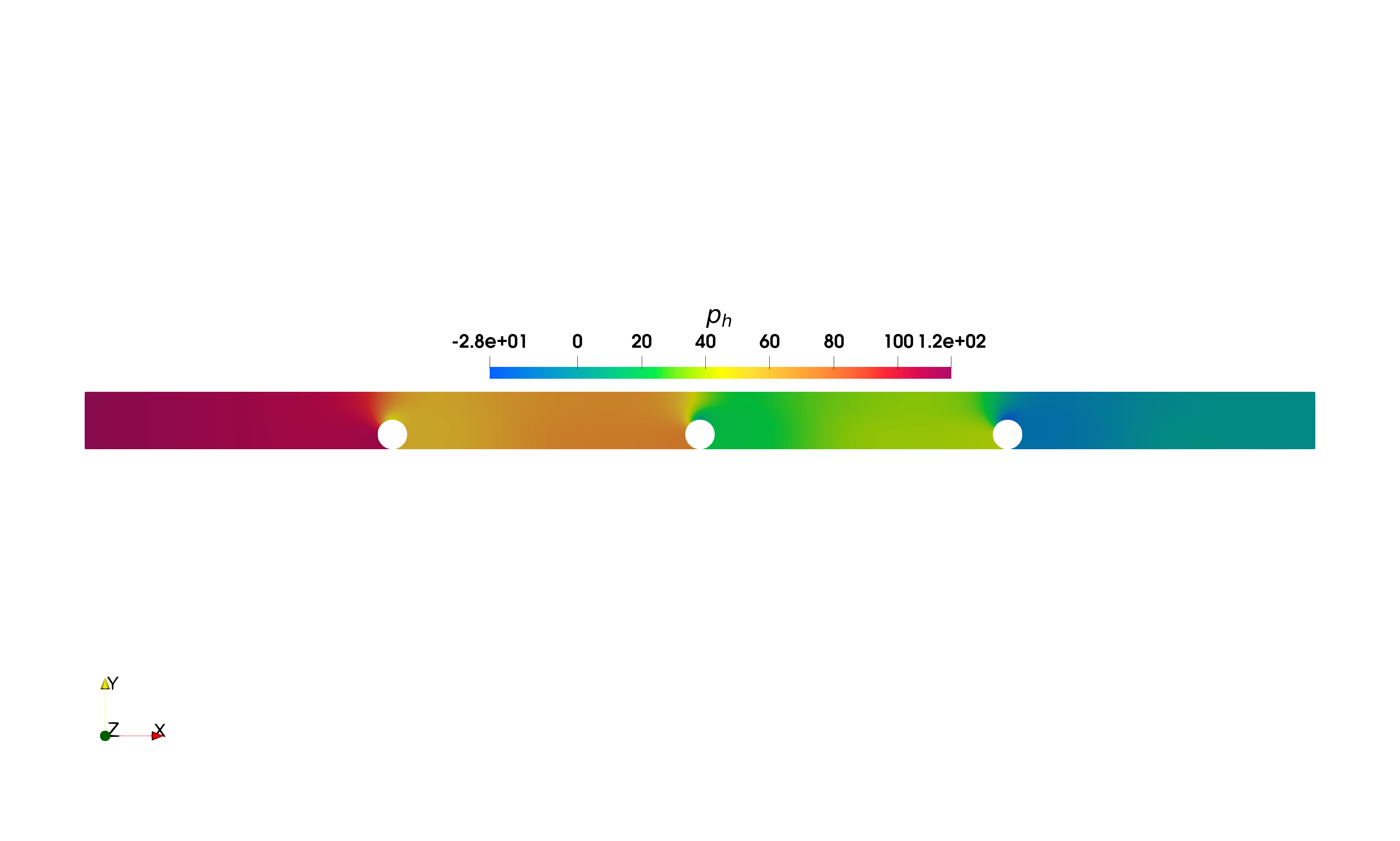}
		
	\end{minipage}
	\begin{minipage}{1\linewidth}
		\centering
		\footnotesize{$u_0=6.45\cdot10^{-2} m/s,\Delta P =5572875Pa $}
		\includegraphics[trim= 0cm 35cm 0cm 27cm,clip, scale=0.1]{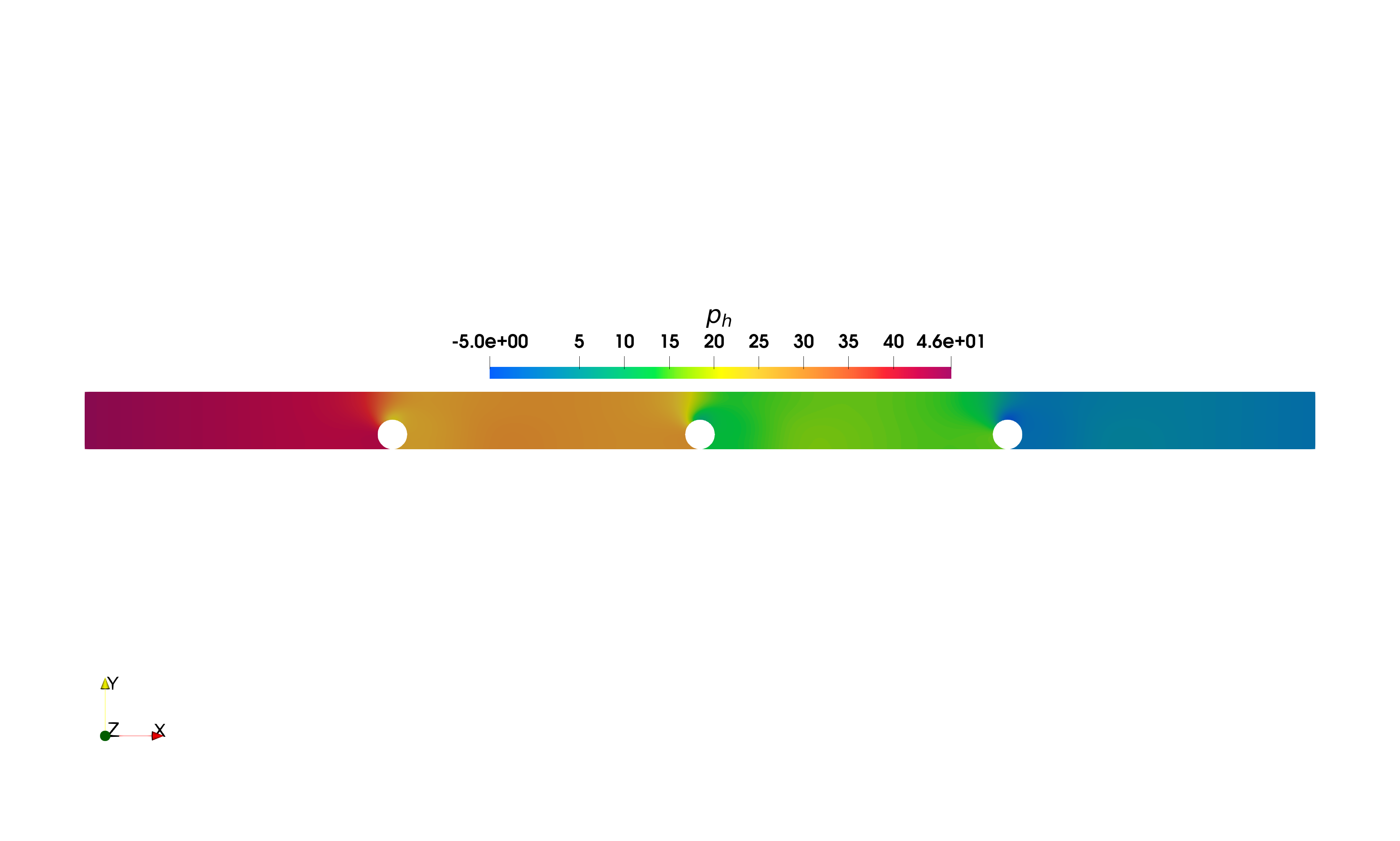}
		
	\end{minipage}
	\begin{minipage}{1\linewidth}
		\centering
		\footnotesize{$u_0=2.58\cdot10^{-1}m/s,\Delta P =5572875Pa $}
		\includegraphics[trim= 0cm 35cm 0cm 27cm,clip, scale=0.1]{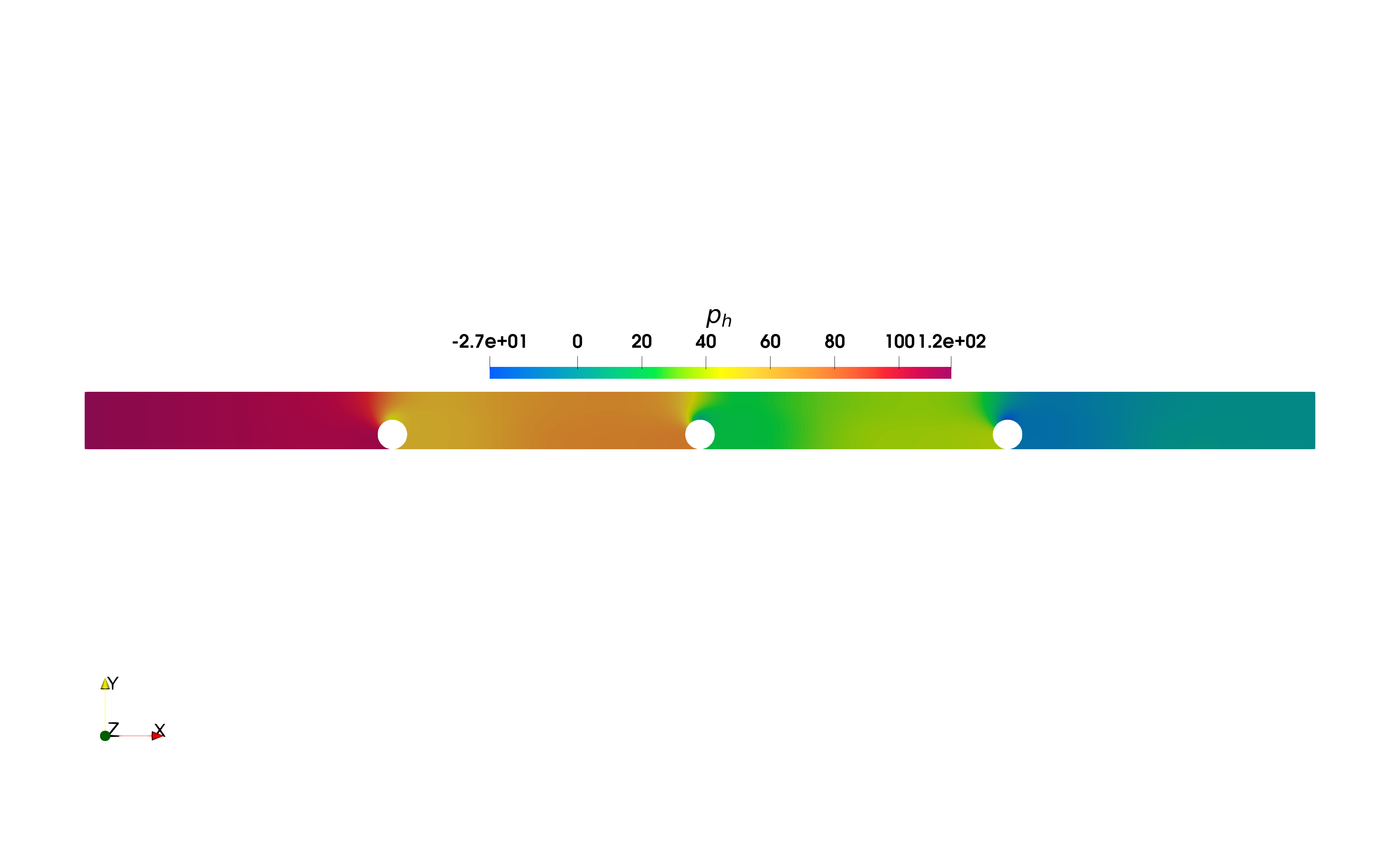}
		
	\end{minipage}
	\caption{Test 2. Comparison of relative pressure using a cavity spacers configuration.}
	\label{fig:cavity3}
\end{figure}
\newpage
\subsubsection{Zig-zag spacers configuration} 
The numerical results for the spacers in zig-zag configuration are presented in Figures \ref{fig:zigzag1}-\ref{fig:zigzag3}. 
\begin{figure}[!ht]
	\centering
	\begin{minipage}{1\linewidth}
		\centering
		\footnotesize{$u_0=6.45\cdot10^{-2} m/s,\Delta P =4053000Pa $}
		\includegraphics[trim= 0cm 35cm 0cm 27cm,clip, scale=0.1]{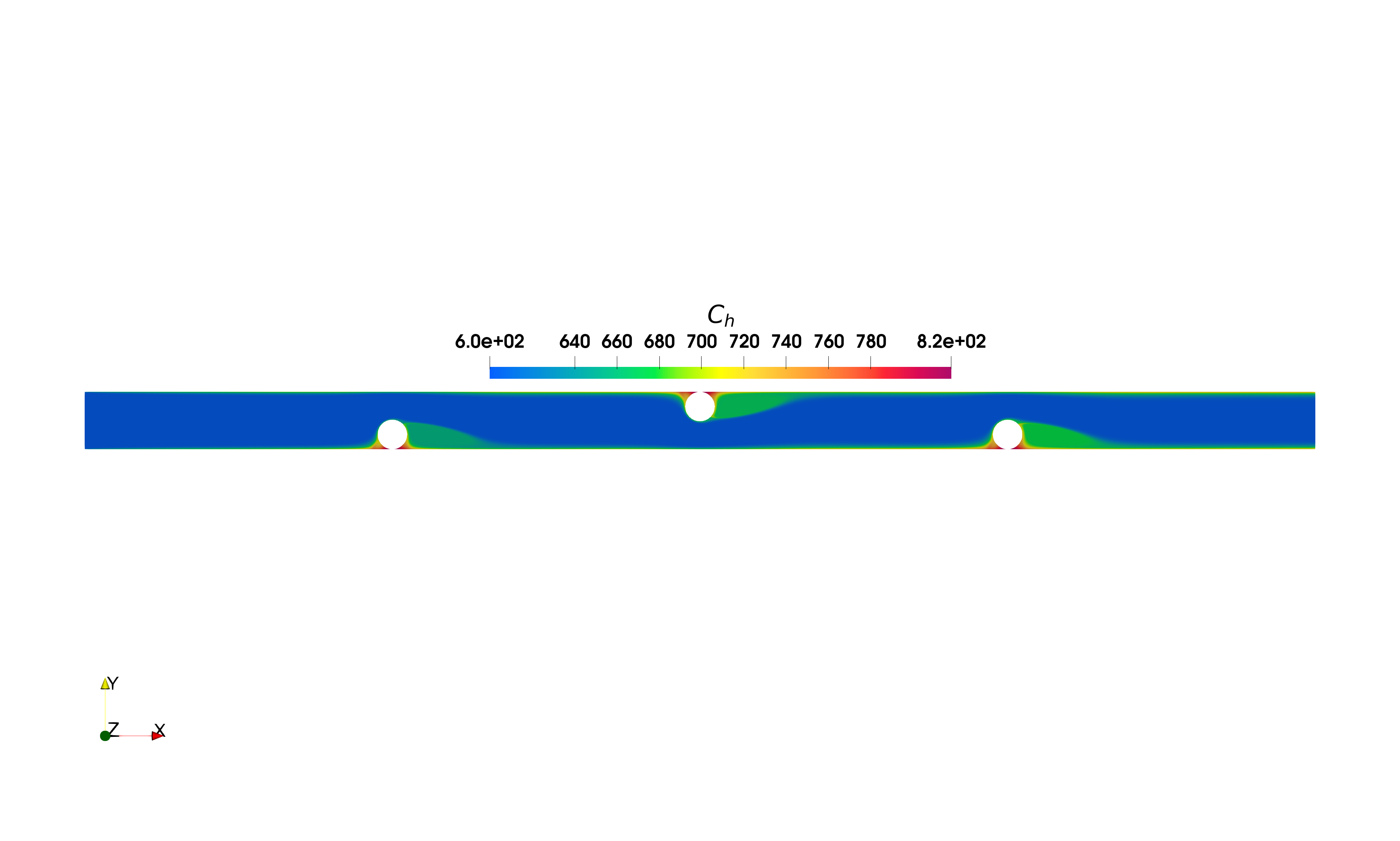}
		
	\end{minipage}
	\begin{minipage}{1\linewidth}
		\centering
		\footnotesize{$u_0=2.58\cdot10^{-1}m/s,\Delta P =4053000Pa $}
		\includegraphics[trim= 0cm 35cm 0cm 27cm,clip, scale=0.1]{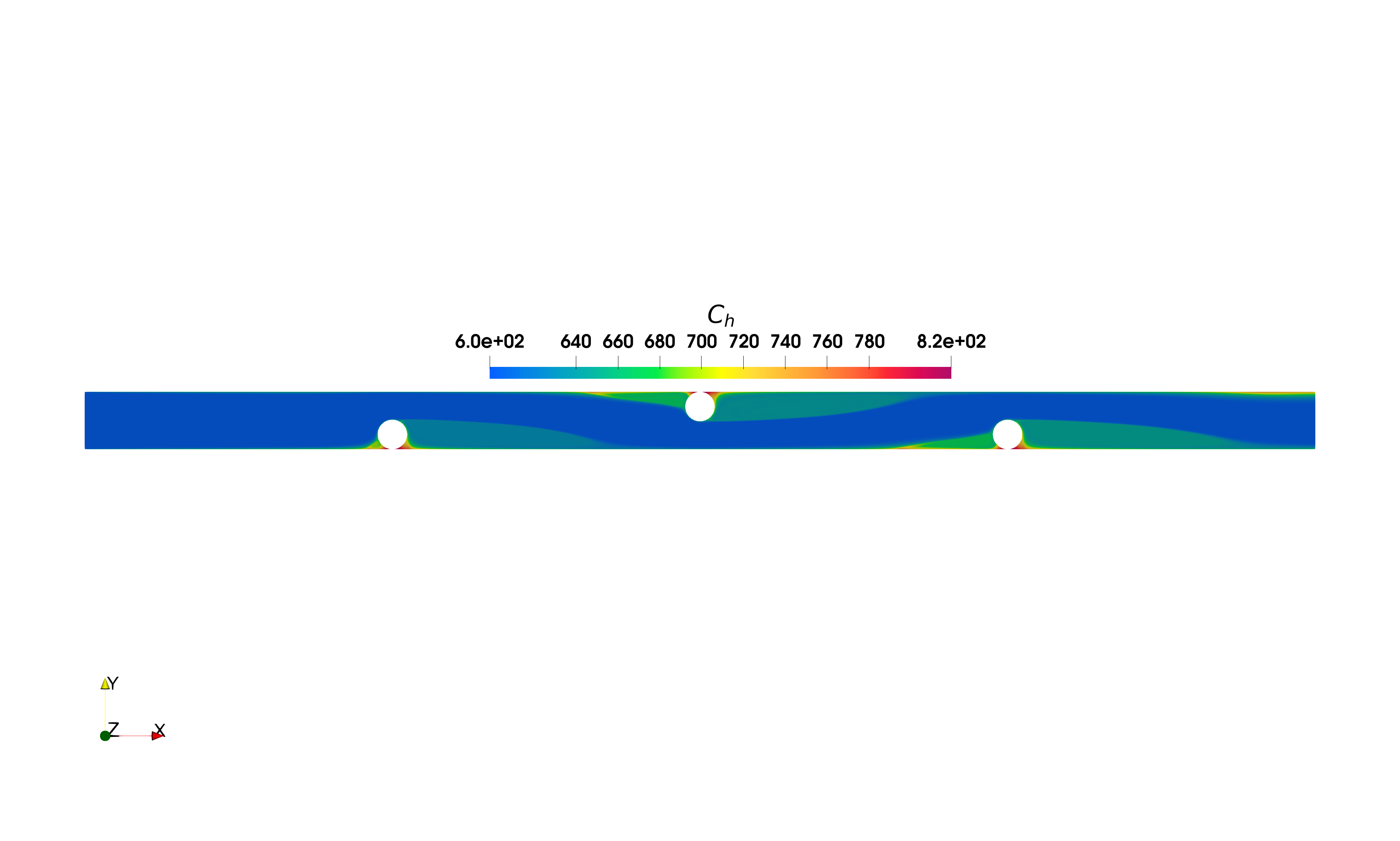}
		
	\end{minipage}
	\begin{minipage}{1\linewidth}
		\centering
		\footnotesize{$u_0=6.45\cdot10^{-2} m/s,\Delta P =5572875Pa $}
		\includegraphics[trim= 0cm 35cm 0cm 27cm,clip, scale=0.1]{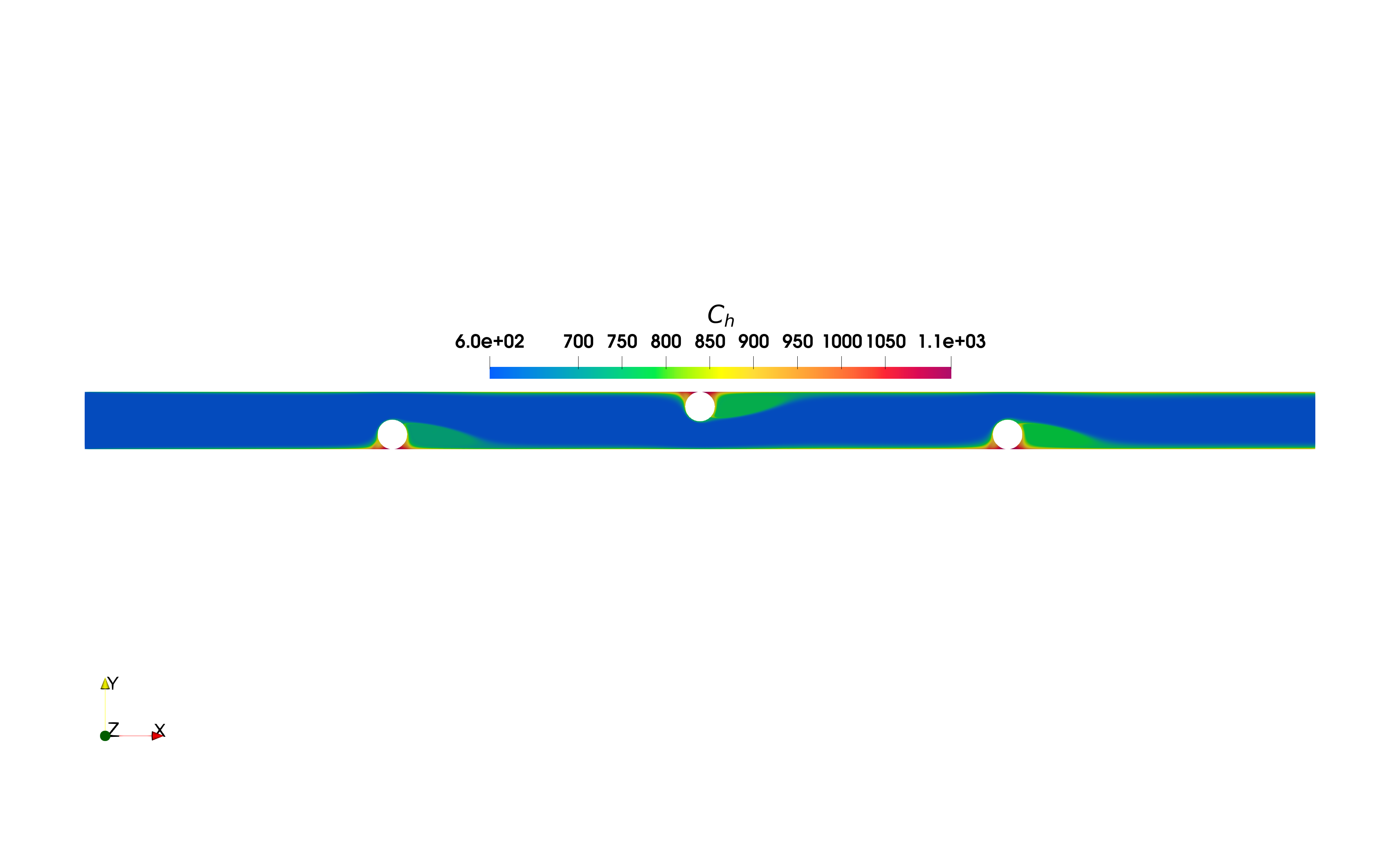}
		
	\end{minipage}
	\begin{minipage}{1\linewidth}
		\centering
		\footnotesize{$u_0=2.58\cdot10^{-1}m/s,\Delta P =5572875Pa $}
		\includegraphics[trim= 0cm 35cm 0cm 27cm,clip, scale=0.1]{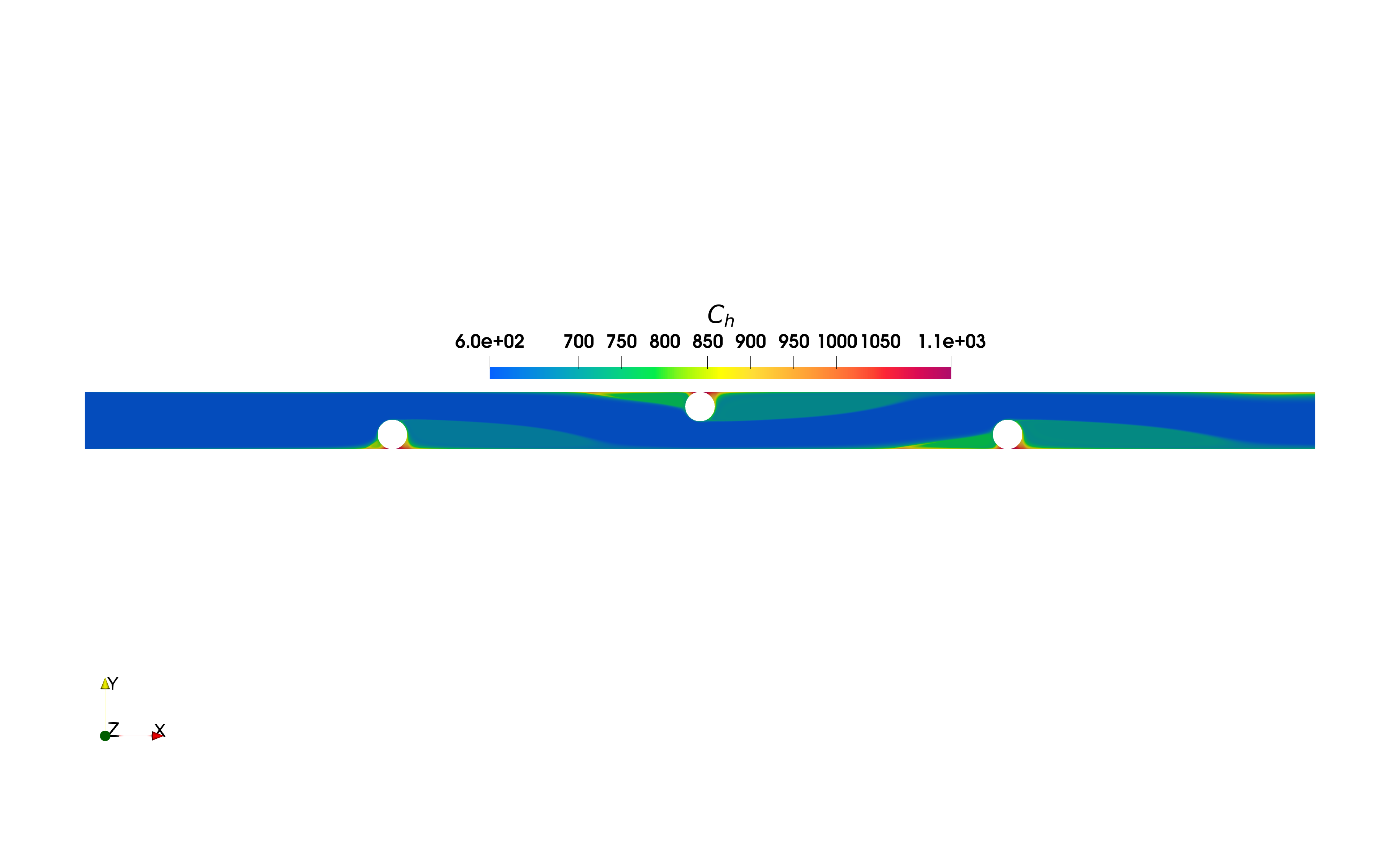}
		
	\end{minipage}
	\caption{Test 2. Comparison of concentration levels using a zig-zag spacers configuration.}
	\label{fig:zigzag1}
\end{figure}

\begin{figure}[!ht]
	\centering
	\begin{minipage}{1\linewidth}
		\centering
		\footnotesize{$u_0=6.45\cdot10^{-2} m/s,\Delta P =4053000Pa $}
		\includegraphics[trim= 2.5cm 8cm 1.5cm 10cm,clip, scale=0.48]{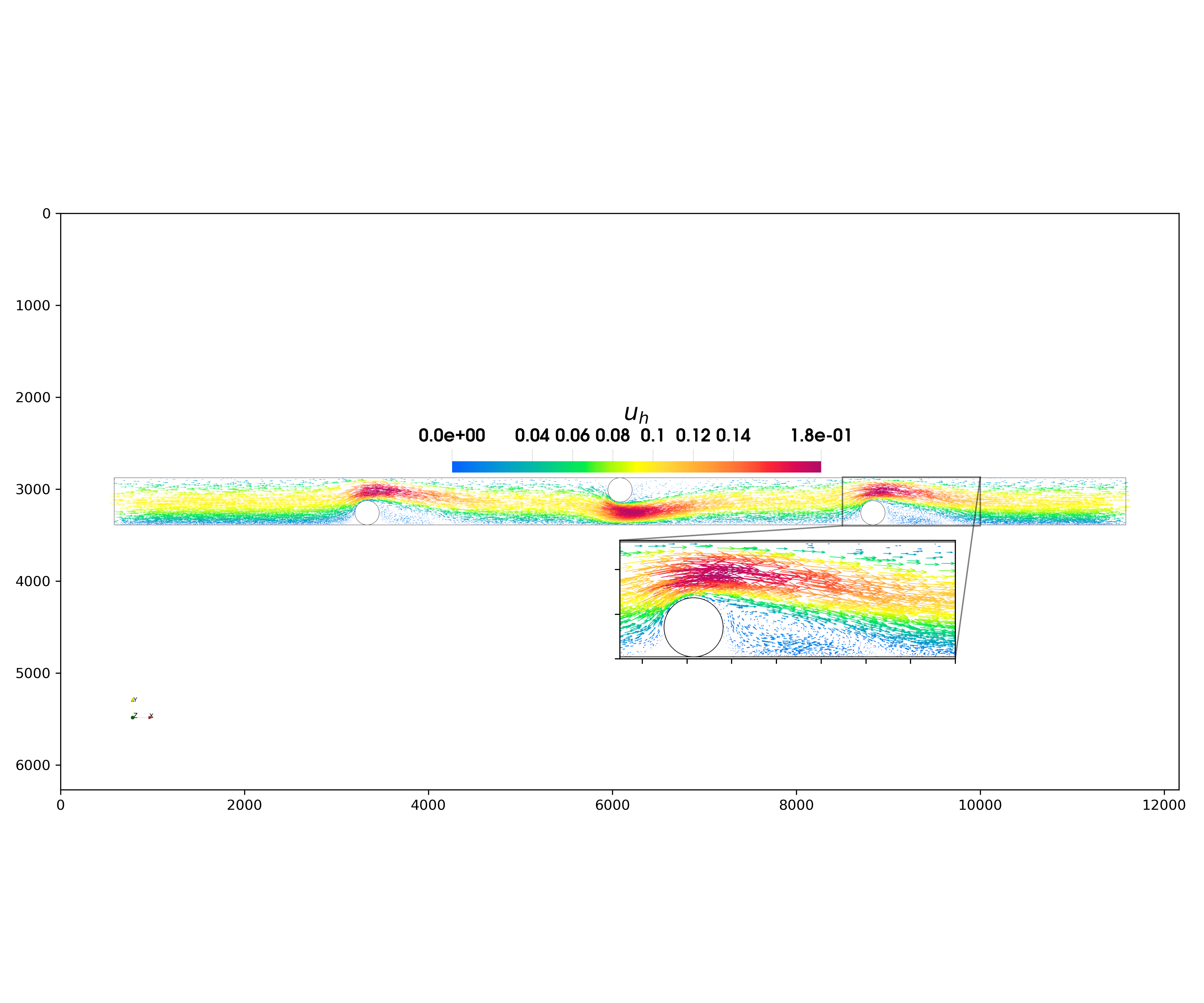}	
	\end{minipage}
	\begin{minipage}{1\linewidth}
		\centering
		\footnotesize{$u_0=2.58\cdot10^{-1}m/s,\Delta P =4053000Pa $}
		\includegraphics[trim= 2.5cm 8cm 1.5cm 10cm,clip, scale=0.48]{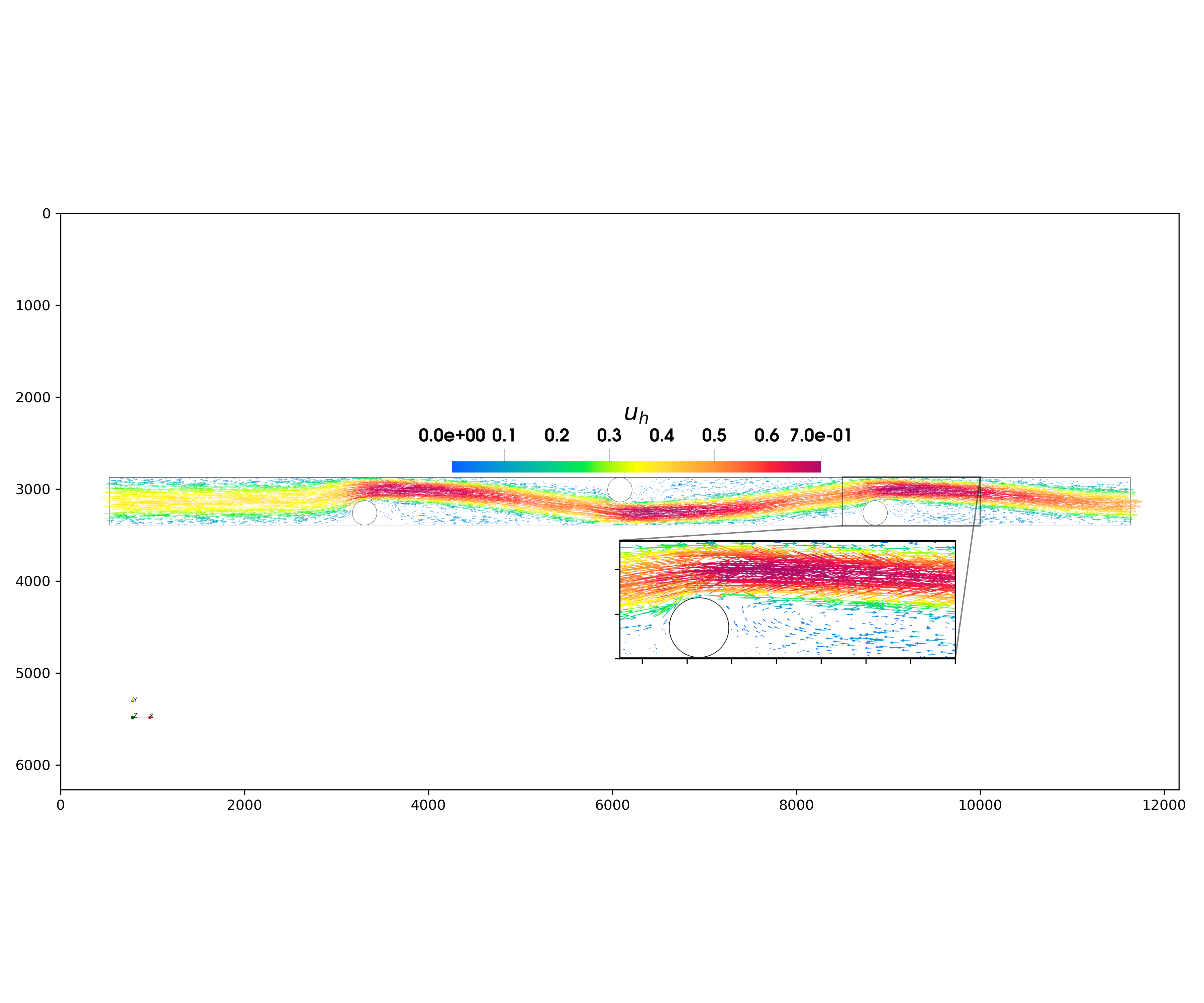}
	\end{minipage}
	\caption{Test 2. Comparison of velocity fields using a zig-zag spacers configuration.}
	\label{fig:zigzag2}
\end{figure}

\begin{figure}[!ht]
	\centering
	\begin{minipage}{1\linewidth}
		\centering
		\footnotesize{$u_0=6.45\cdot10^{-2} m/s,\Delta P =4053000Pa $}
		\includegraphics[trim= 0cm 35cm 0cm 27cm,clip, scale=0.1]{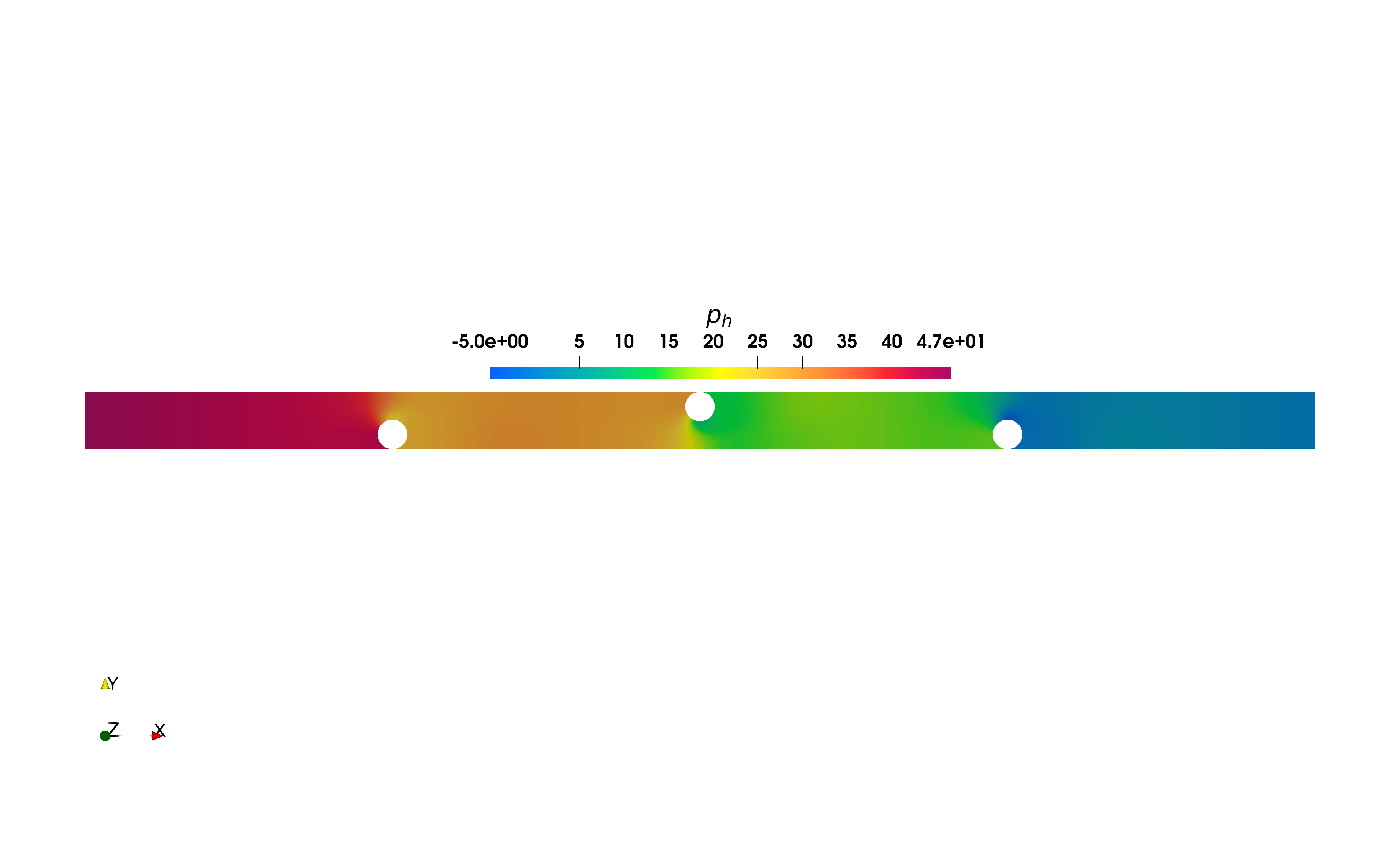}
		
	\end{minipage}
	\begin{minipage}{1\linewidth}
		\centering
		\footnotesize{$u_0=2.58\cdot10^{-1}m/s,\Delta P =4053000Pa $}
		\includegraphics[trim= 0cm 35cm 0cm 27cm,clip, scale=0.1]{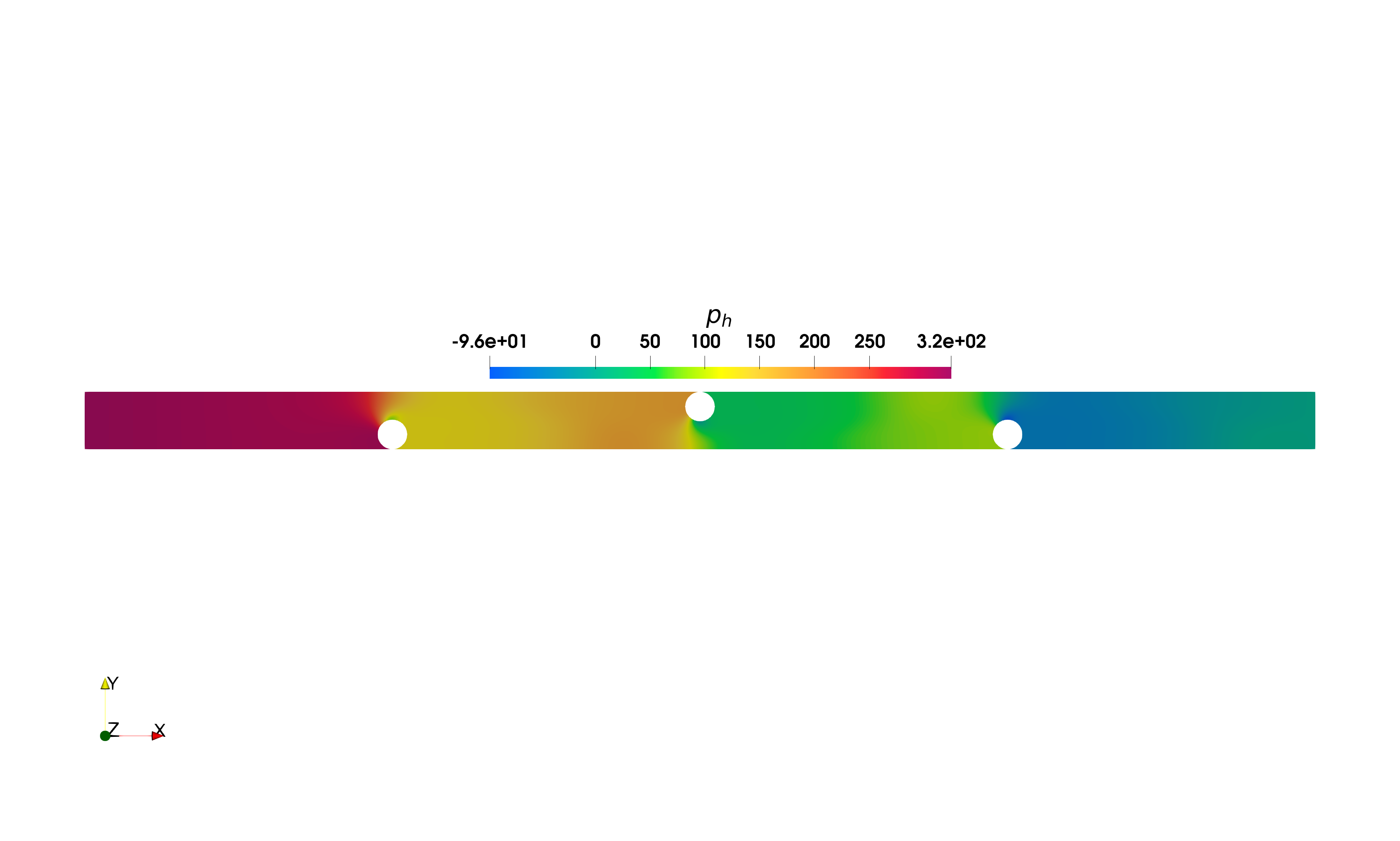}
		
	\end{minipage}
	\begin{minipage}{1\linewidth}
		\centering
		\footnotesize{$u_0=6.45\cdot10^{-2} m/s,\Delta P =5572875Pa $}
		\includegraphics[trim= 0cm 35cm 0cm 27cm,clip, scale=0.1]{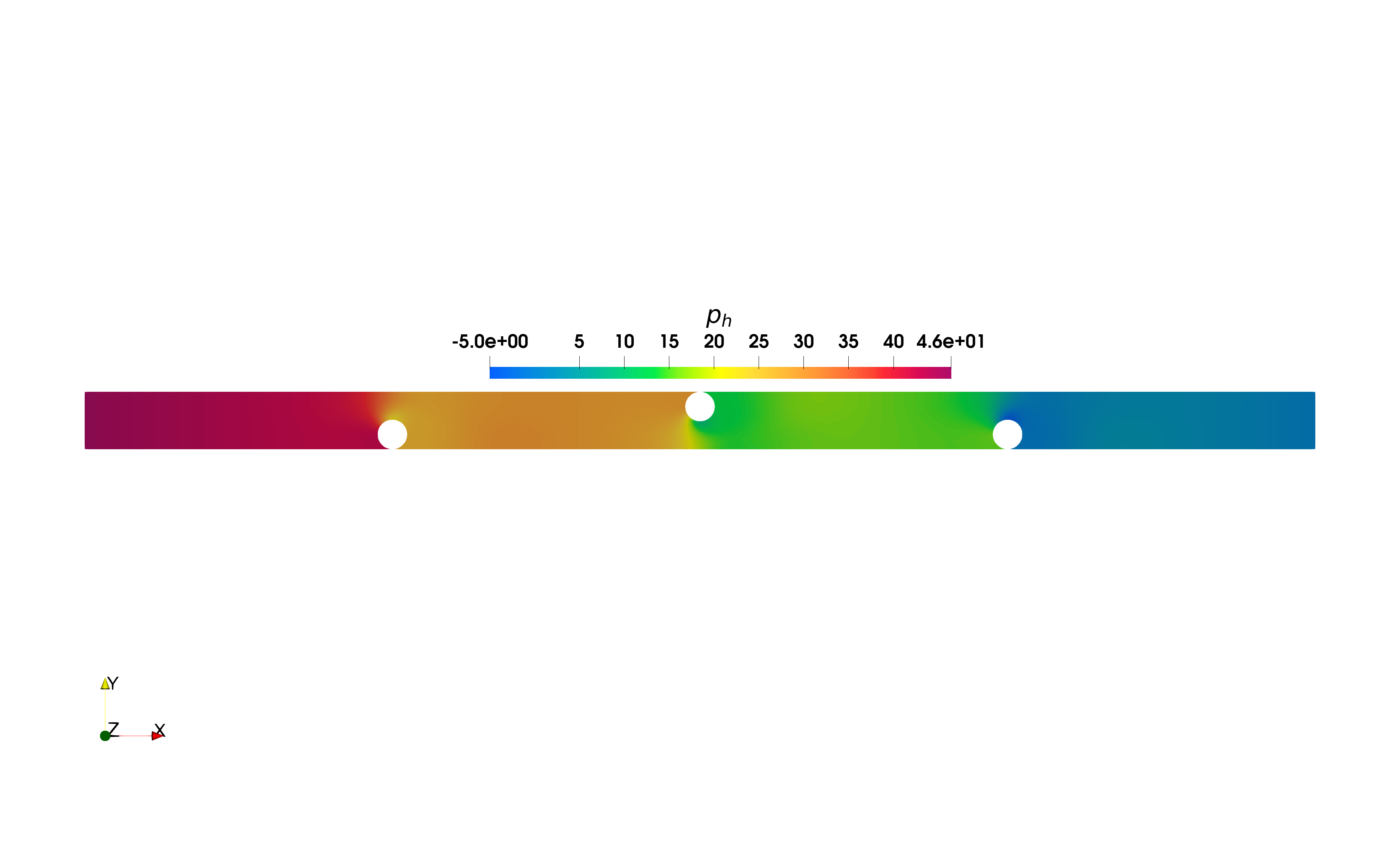}
		
	\end{minipage}
	\begin{minipage}{1\linewidth}
		\centering
		\footnotesize{$u_0=2.58\cdot10^{-1}m/s,\Delta P =5572875Pa $}
		\includegraphics[trim= 0cm 35cm 0cm 27cm,clip, scale=0.1]{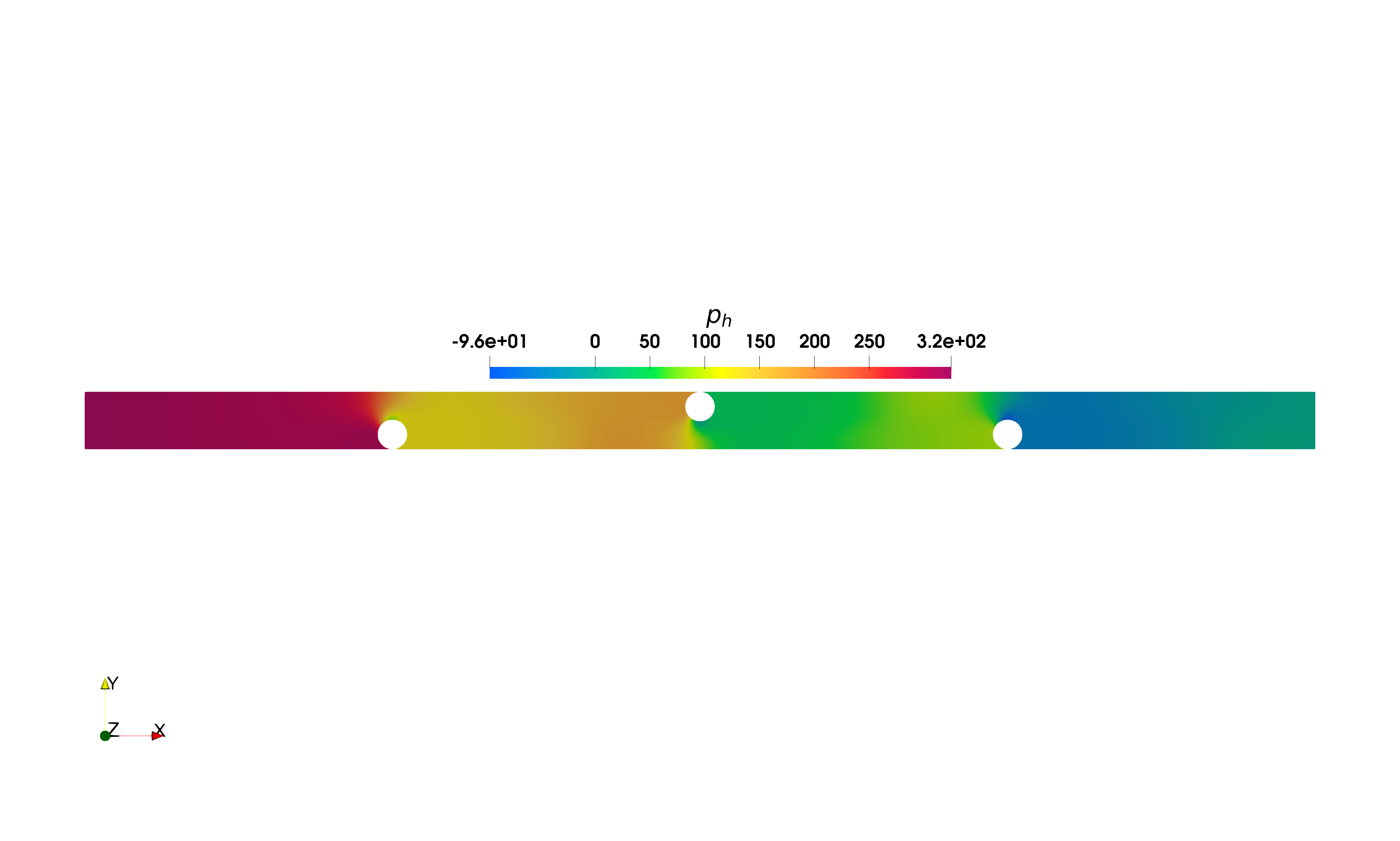}
		
	\end{minipage}
	\caption{Test 2. Comparison of relative pressure using a zig-zag spacers configuration.}
	\label{fig:zigzag3}
\end{figure}

\subsubsection{Submerged spacers configuration}
The numerical results for the spacers in submerged configuration are presented in Figures \ref{fig:submerge1}-\ref{fig:submerge3}. 

\begin{figure}[!ht]
	\centering
	\begin{minipage}{1\linewidth}
		\centering
		\footnotesize{$u_0=6.45\cdot10^{-2} m/s,\Delta P =4053000Pa $}
		\includegraphics[trim= 0cm 35cm 0cm 27cm,clip, scale=0.1]{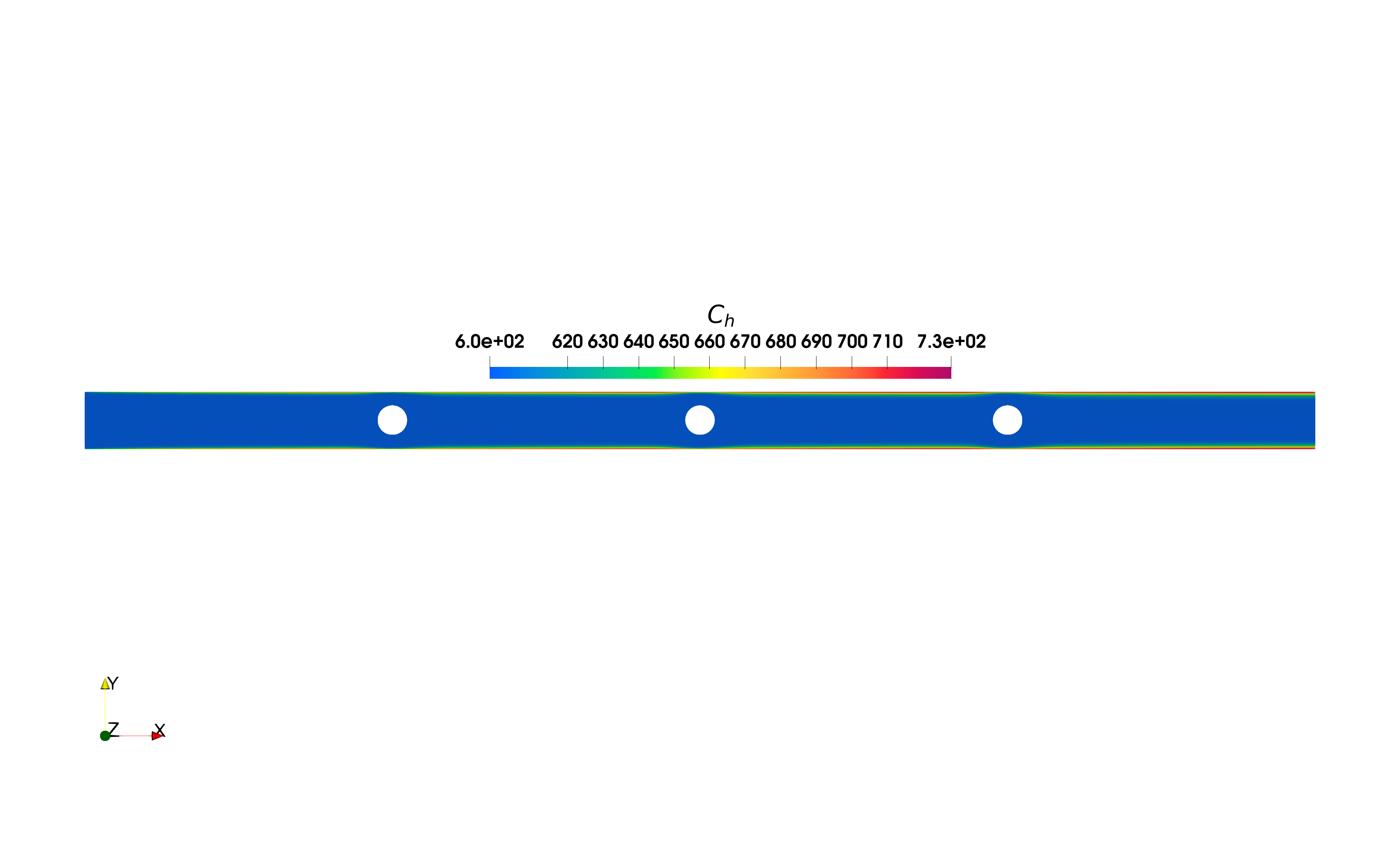}
		
	\end{minipage}
	\begin{minipage}{1\linewidth}
		\centering
		\footnotesize{$u_0=2.58\cdot10^{-1}m/s,\Delta P =4053000Pa $}
		\includegraphics[trim= 0cm 35cm 0cm 27cm,clip, scale=0.1]{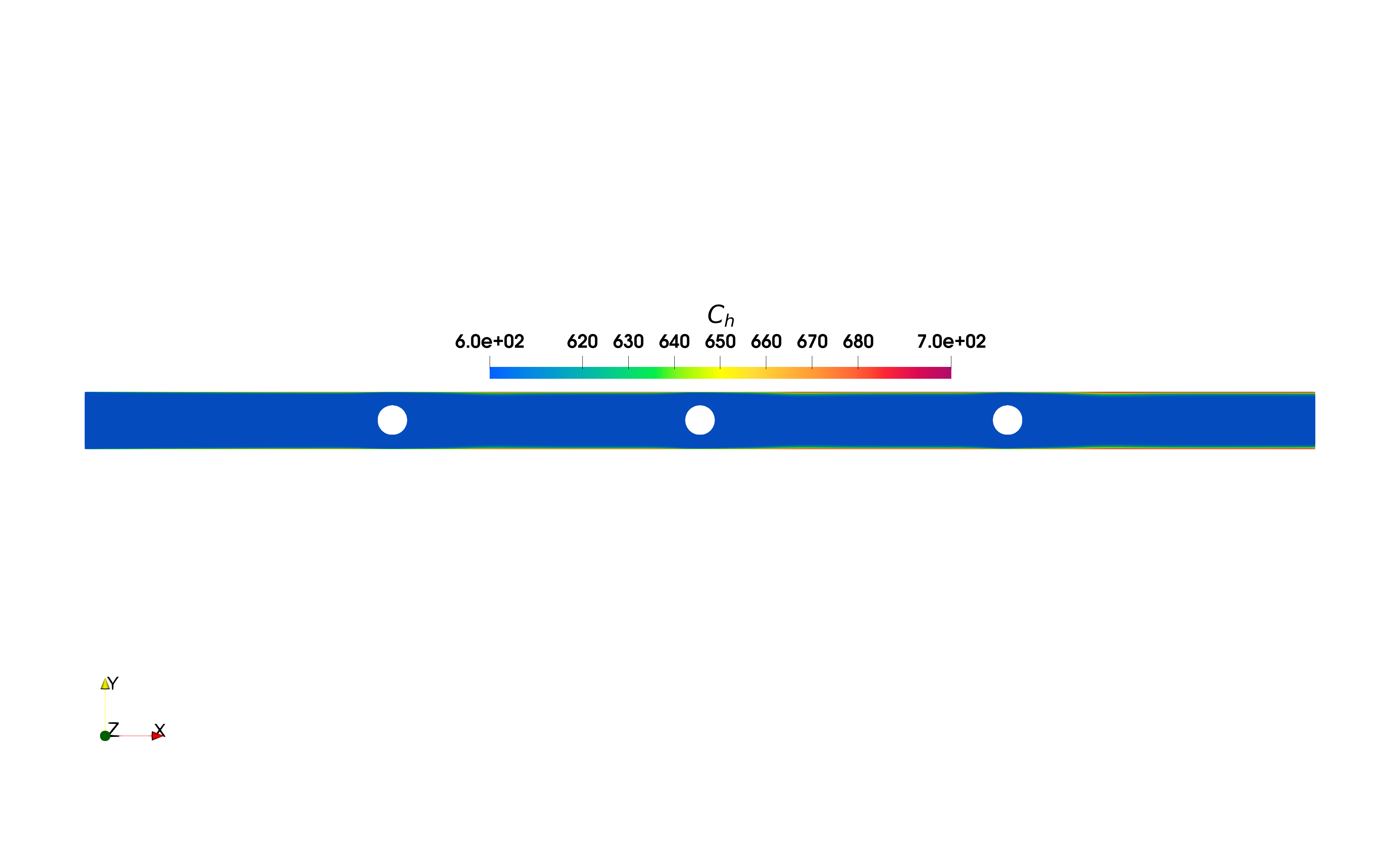}
		
	\end{minipage}
	\begin{minipage}{1\linewidth}
		\centering
		\footnotesize{$u_0=6.45\cdot10^{-2} m/s,\Delta P =5572875Pa $}
		\includegraphics[trim= 0cm 35cm 0cm 27cm,clip, scale=0.1]{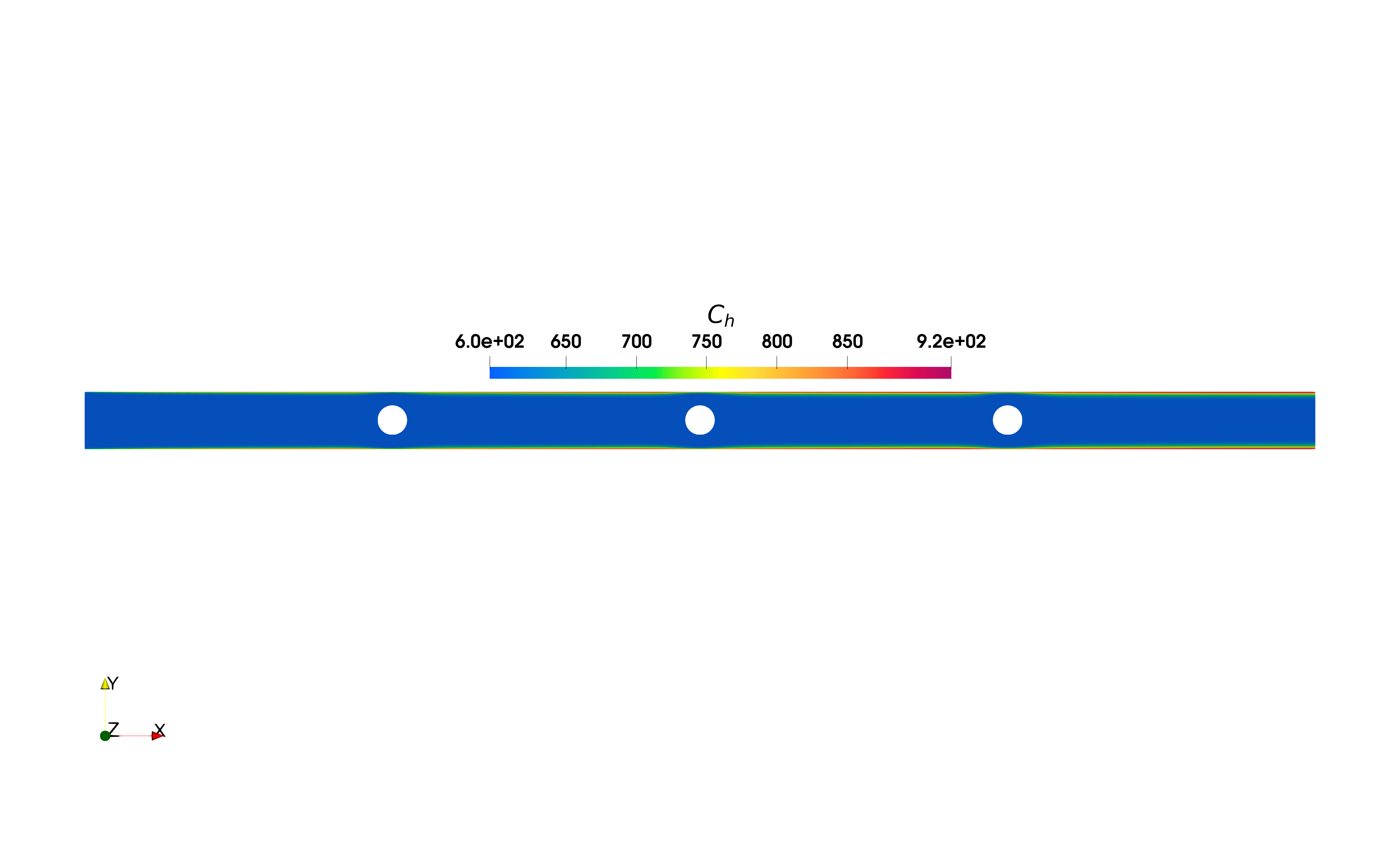}
		
	\end{minipage}
	\begin{minipage}{1\linewidth}
		\centering
		\footnotesize{$u_0=2.58\cdot10^{-1}m/s,\Delta P =5572875Pa $}
		\includegraphics[trim= 0cm 35cm 0cm 27cm,clip, scale=0.1]{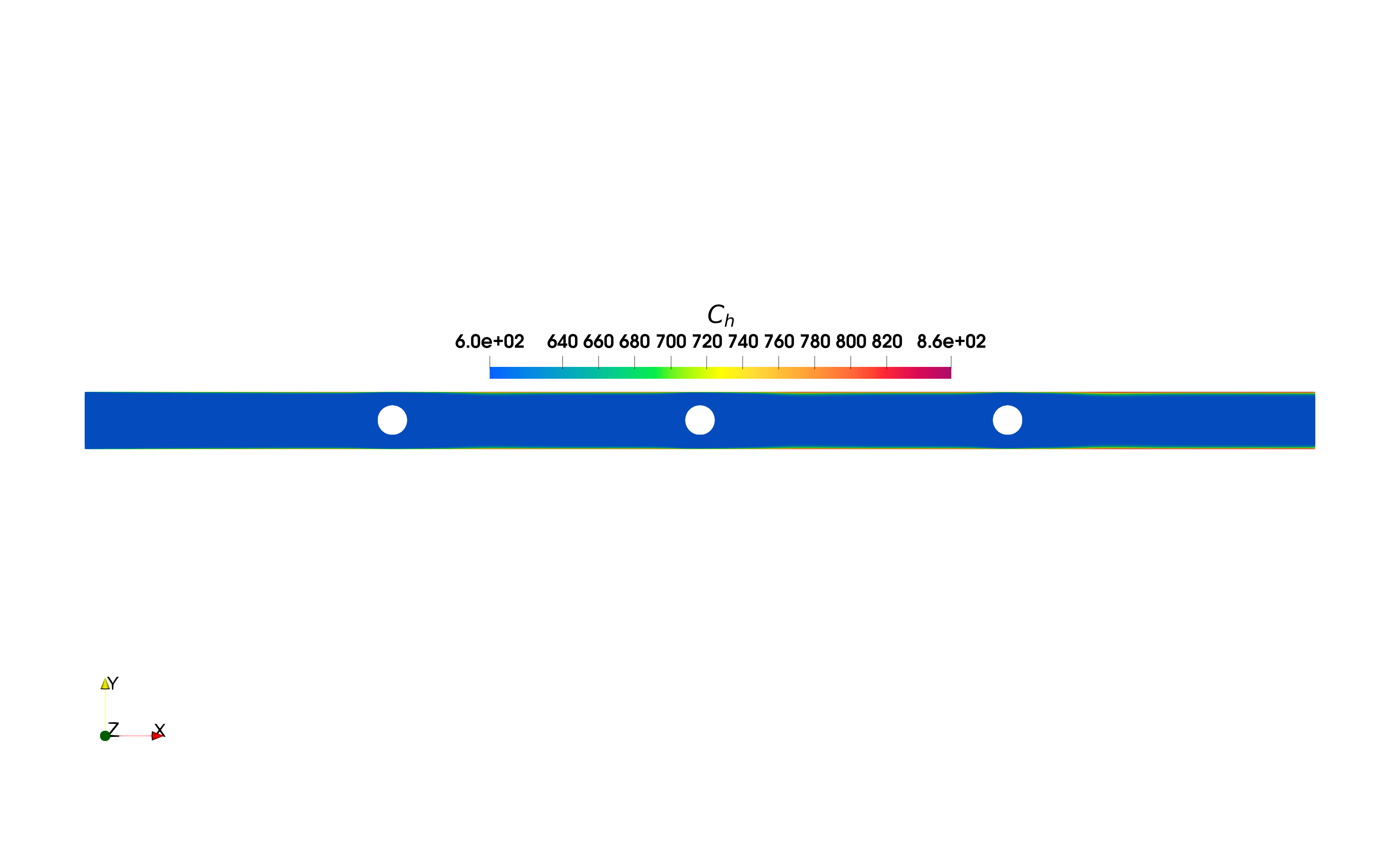}
		
	\end{minipage}
	\caption{Test 2. Comparison of concentration levels using a submerged spacers configuration.}
	\label{fig:submerge1}
\end{figure}

\begin{figure}[!ht]
	\centering
	\begin{minipage}{1\linewidth}
		\centering
		\footnotesize{$u_0=6.45\cdot10^{-2} m/s,\Delta P =4053000Pa $}
		\includegraphics[trim= 2.5cm 8cm 1.5cm 10cm,clip, scale=0.48]{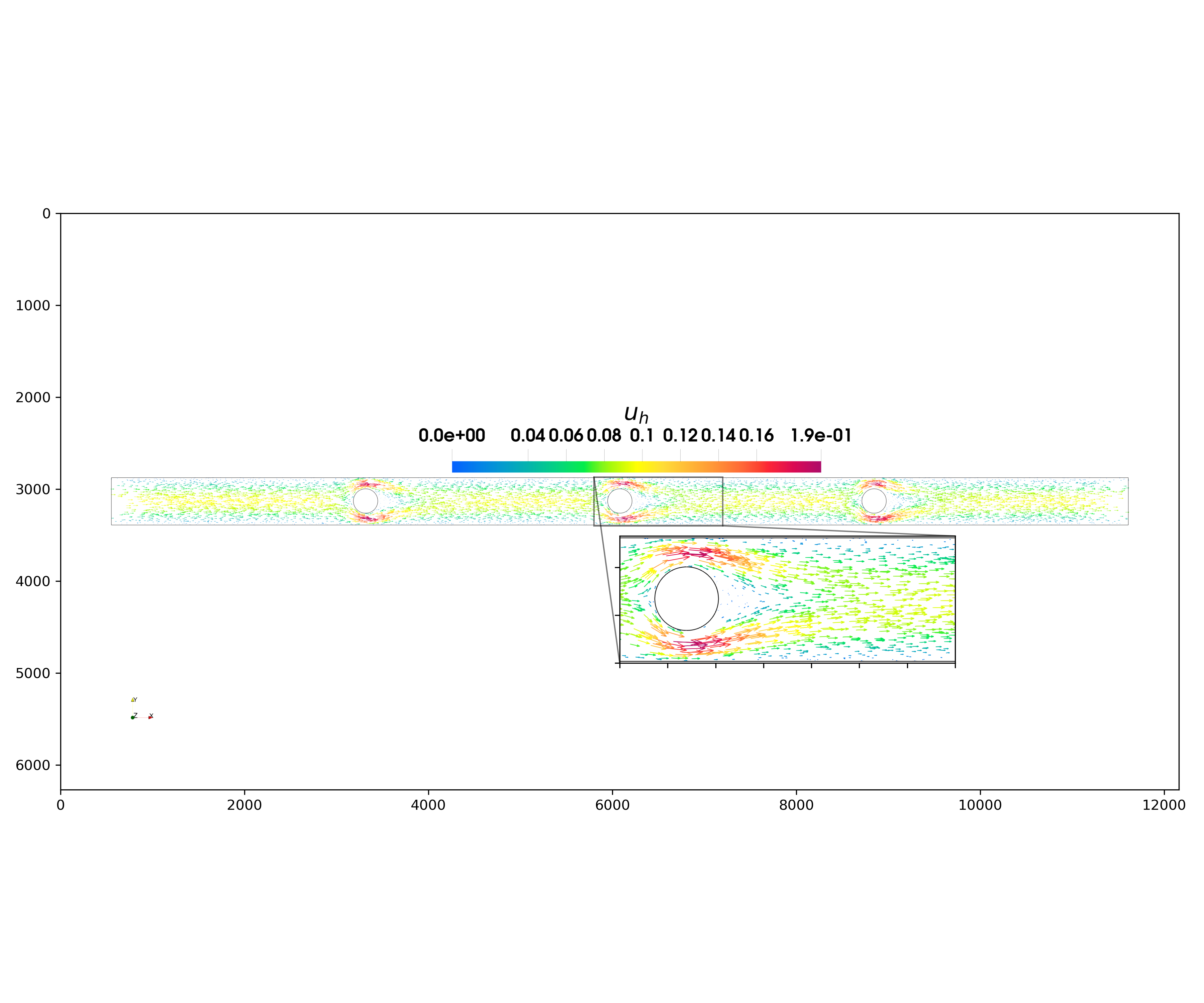}	
	\end{minipage}
	\begin{minipage}{1\linewidth}
		\centering
		\footnotesize{$u_0=2.58\cdot10^{-1}m/s,\Delta P =4053000Pa $}
		\includegraphics[trim= 2.5cm 8cm 1.5cm 10cm,clip, scale=0.48]{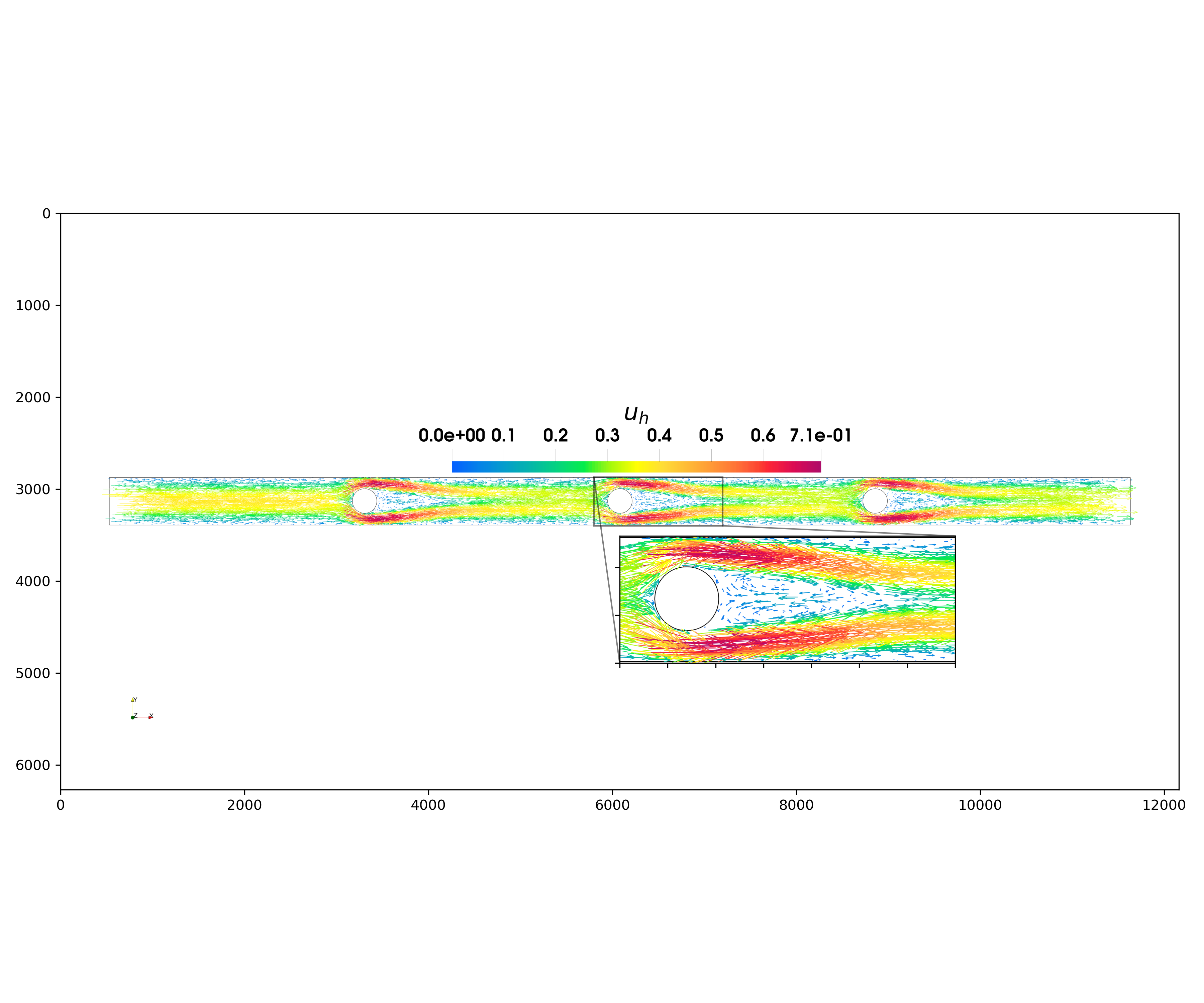}	
	\end{minipage}
	\caption{Test 2. Comparison of velocity fields using a submerged spacers configuration.}
	\label{fig:submerge2}
\end{figure}
\begin{figure}[!ht]
	\centering
	\begin{minipage}{1\linewidth}
		\centering
		\footnotesize{$u_0=6.45\cdot10^{-2} m/s,\Delta P =4053000Pa $}
		\includegraphics[trim= 0cm 35cm 0cm 27cm,clip, scale=0.1]{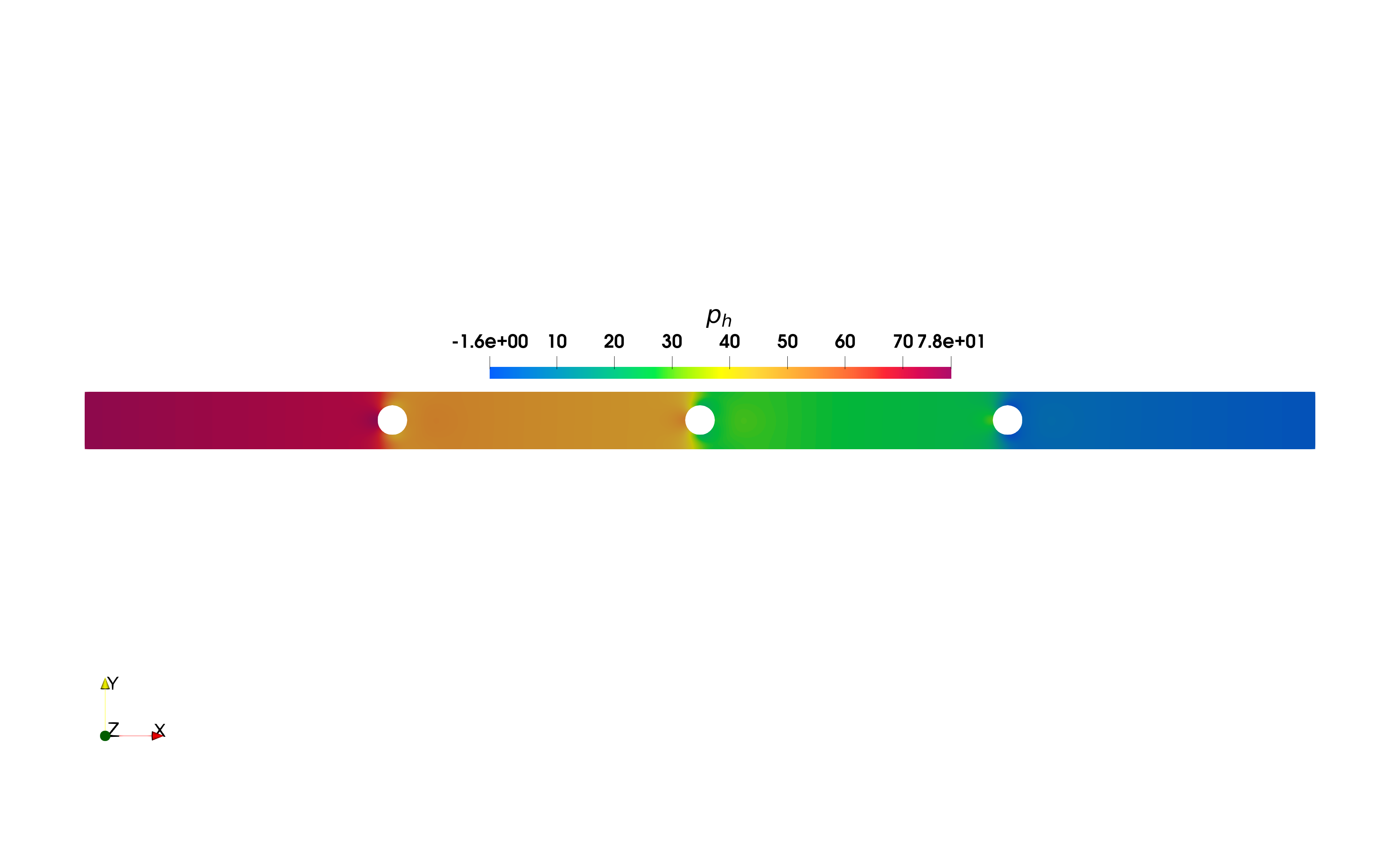}

	\end{minipage}
	\begin{minipage}{1\linewidth}
		\centering
		\footnotesize{$u_0=2.58\cdot10^{-1}m/s,\Delta P =4053000Pa $}
		\includegraphics[trim= 0cm 35cm 0cm 27cm,clip, scale=0.1]{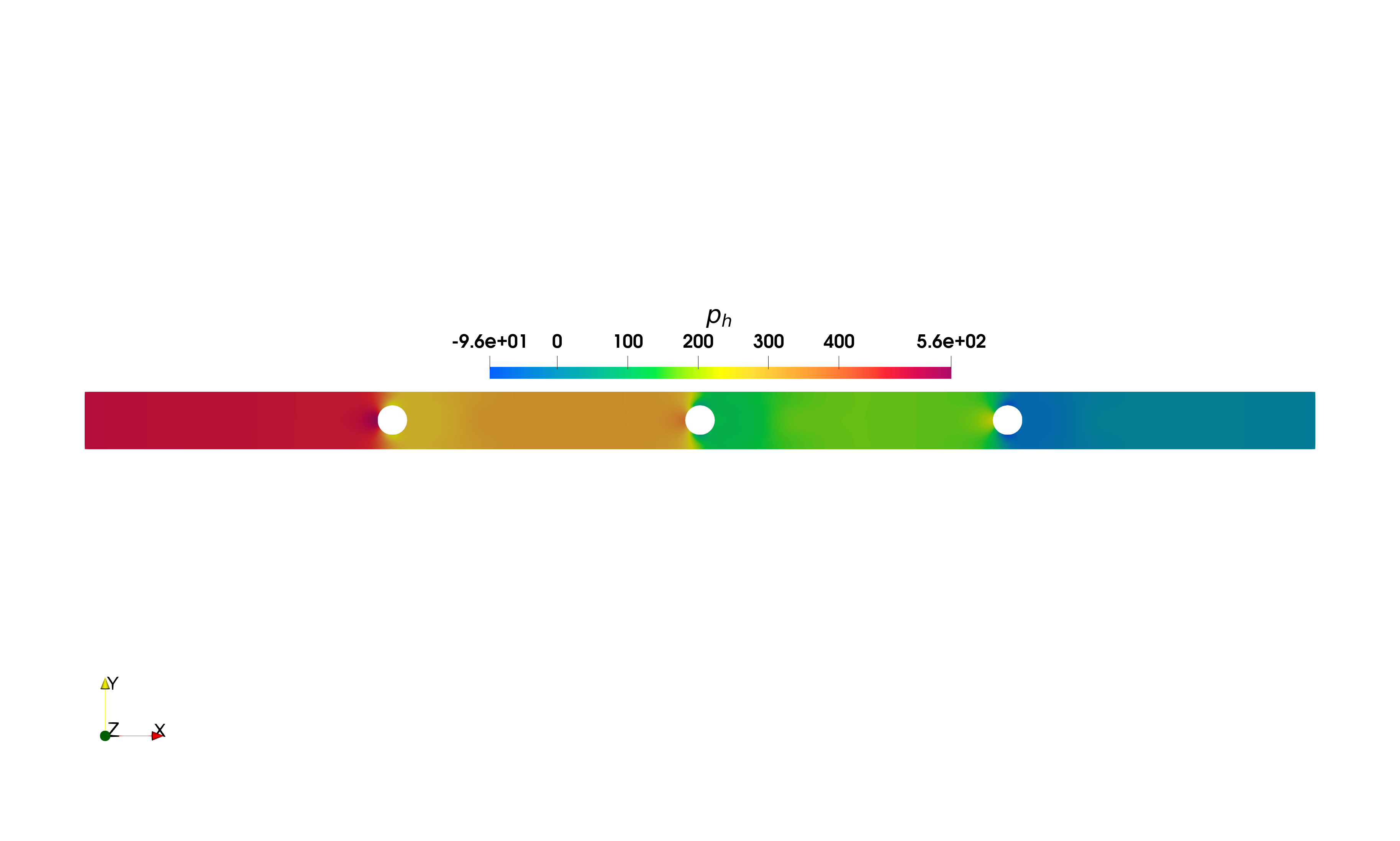}	
	\end{minipage}
	\begin{minipage}{1\linewidth}
		\centering
		\footnotesize{$u_0=6.45\cdot10^{-2} m/s,\Delta P =5572875Pa $}	
		\includegraphics[trim= 0cm 35cm 0cm 27cm,clip, scale=0.1]{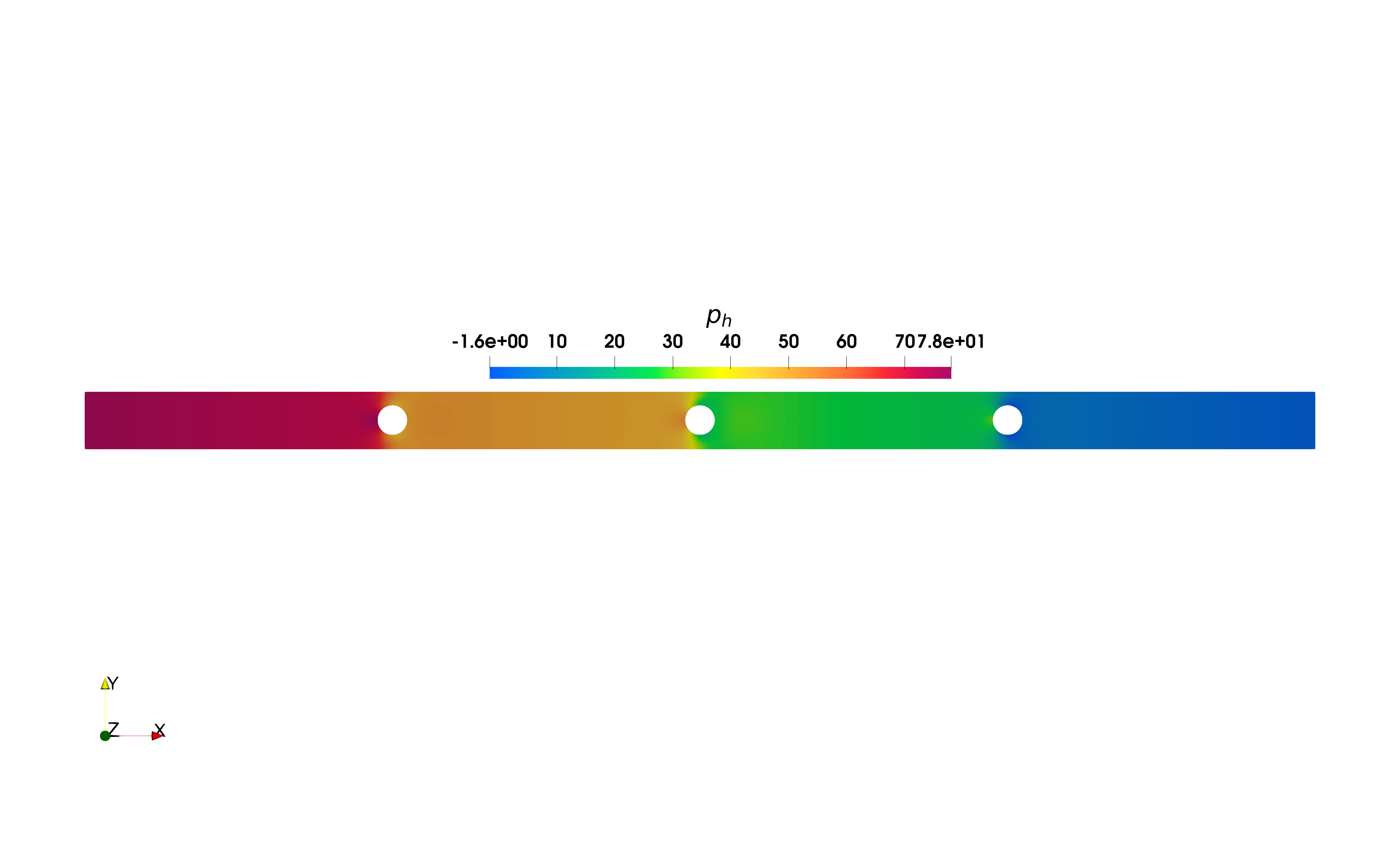}
	\end{minipage}
	\begin{minipage}{1\linewidth}
		\centering
		\footnotesize{$u_0=2.58\cdot10^{-1}m/s,\Delta P =5572875Pa $}	
		\includegraphics[trim= 0cm 35cm 0cm 27cm,clip, scale=0.1]{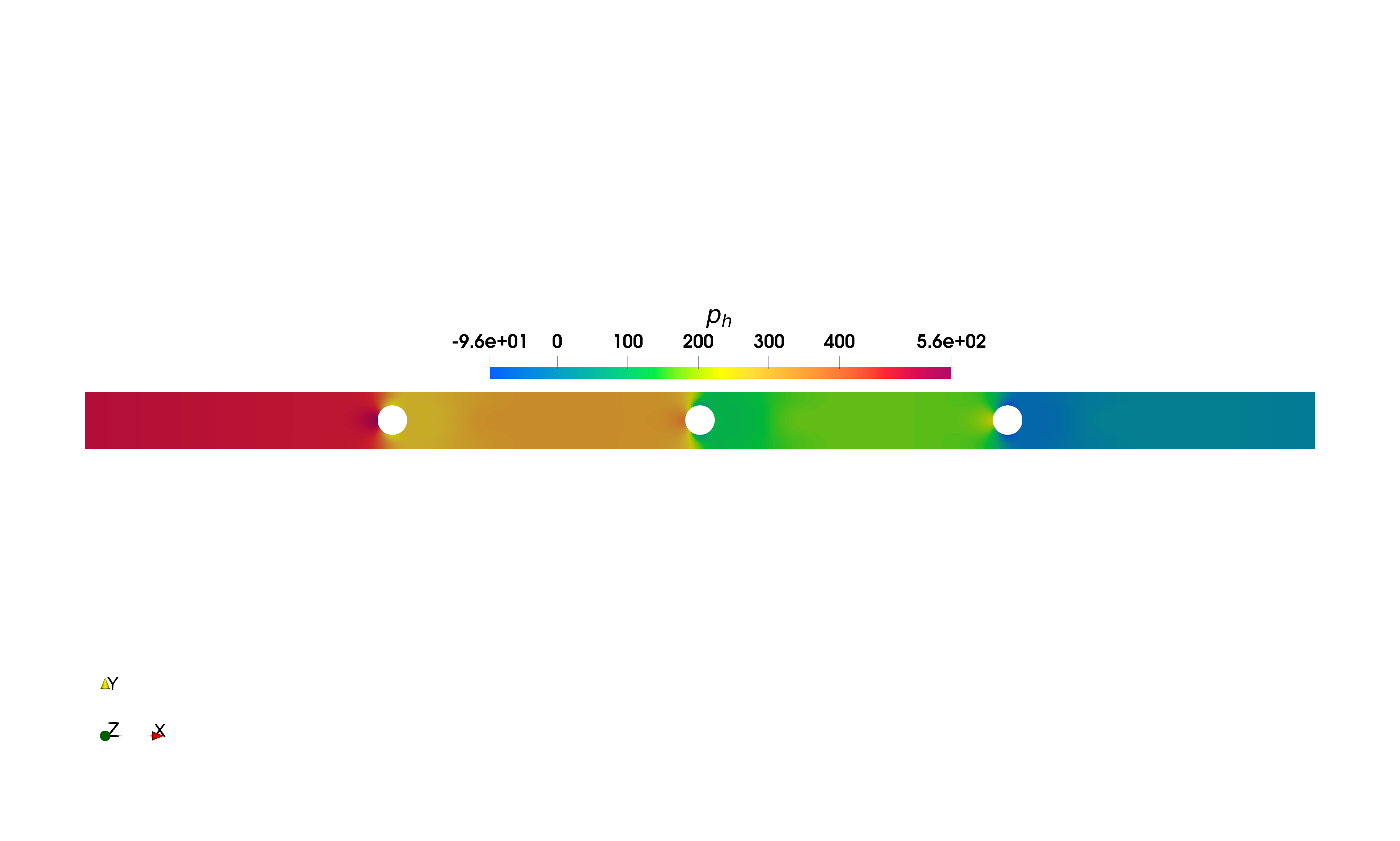}
	\end{minipage}
	\caption{Test 2. Comparison of relative pressure using a submerged spacers configuration.}
	\label{fig:submerge3}
\end{figure}
\newpage
\subsection{Discussion}
\label{sec:discusion}
This study focuses on the effect of three main parameters on the behavior of the membrane channel: the transmembrane pressure, the inlet velocity (i.e., the inlet mass flow) and the spacer configuration.

Analysis in Figures \ref{fig:cavity1}, \ref{fig:cavity2} and \ref{fig:cavity3} show that the increase in mass flow at the inlet (increase in inlet velocity) increases the length of the boundary layer and diminishes its concentration. Also we can establish that $\Delta P$ does not have any appreciable effect on the hydrodynamics of the channel for equal inlet mass flows. Indeed, the permeate flow difference between different $\Delta P$, although significative in terms of the obtained permeate flow, is two to three orders of magnitude smaller than the inlet velocity. These behaviors are further confirmed by Figure \ref{fig:wallcompcavity}.

In all of the performed cases, the main cause of pressure loss is the presence of a bottleneck caused by the spacers. As seen in Figure \ref{fig:cavity2}, the spacers cause a sudden augmentation of velocity and velocity gradients in those zones, which in turn derives into major pressure losses. The increment in inlet flow gets bigger as this gradient changes, and the pressure losses become greater. However, for increasing $\Delta P$ there is a subtle decrease in pressure loss, because the increment in permeate flux at higher $\Delta P$ decreases the amount of mass flow that has to cross the bottleneck, effectively reducing the pressure loss in each stage.

Both opposing behaviors leads to conclude that there is an optimum operating condition for the system from the energy efficiency point of view, in which both the increasing pressure loss due to increasing pressure flow and the reducing pressure loss from the increment in transmembrane pressure leads to an overall minimum pressure loss in the channel. This optimization has well established boundaries, as $\Delta P$ must be lower than the liquid entry pressure (LEP) of the membrane which would cause the membrane's rupture, but must be higher than the osmotic pressure of the feed solution, as otherwise the process would turn into Forward osmosis (FO). To determine these optimum values, further numerical essays need to be performed, which will be addressed in a subsequent article.

The previous analysis is the same for a zig-zag and submerged configurations upon comparing Figures \ref{fig:zigzag1}--\ref{fig:zigzag3} and \ref{fig:submerge1}--\ref{fig:submerge3}, respectively. However, for the case of a submerged configuration, the boundary layer of salt is not developed as seen in Figure \ref{fig:submerge1}, mainly due to the absence of recirculation zones close to the membrane walls, depicted in Figure \ref{fig:submerge2}. The recirculating fluid is located in the middle of the channel behind the spacers, away from the influence of the membrane wall concentration boundary layer (see Figure \ref{fig:submerge1}). Therefore, the effects of accumulation of salt in the recirculation zone will be greater along the channel, provided that the boundary layer is large enough.

In the case of pressure losses in submerged configurations, in contrast to the cavity and zig-zag configurations, has two bottlenecks in each spacer that are smaller, causing more pressure loss inside the channel. As the bottlenecks are reduced to half the size, both the velocity gradients and the convective flux at the bottenecks increase, therefore increasing the pressure losses by viscous friction and convective losses.

The main variables studied in this paper for all conditions are shown in Figures \ref{fig:wallcompcavity} through \ref{fig:wallcompconfigs}. From inspection of the permeate velocity curves in Figures \ref{fig:wallcompcavity}--\ref{fig:wallcompsubmerge}, we can say that incrementing the inlet mass flow helps to diminish the concentration in the whole membrane, therefore reducing the osmotic pressure effect in the permeate flux. Also, the increase in pressure leads to an increase in concentration at the membrane, as more water is being separated out, leaving behind more salt. Also, as expected, the permeate flux increases when $\Delta P$ is increased.

Pressure profiles in Figures \ref{fig:wallcompcavity}--\ref{fig:wallcompsubmerge} show the pressure loss along the membrane channel for the cavity configuration for all different systems. The two main conclusions are that $\Delta P$ has a negligible effect on the pressure loss of the system, as stated earlier in the discussion, and the increment in the inlet velocity has a positive contribution in the pressure loss. Indeed, for twice the inlet velocity, the pressure losses are increased by a factor greater than two. The negative pressure values in all Figures are valid due to the incompressible formulation of the problem, which allows to write the variable $p$ in terms of the gauge pressure.

\begin{figure}[!h]
	\centering
	\includegraphics[scale=0.25]{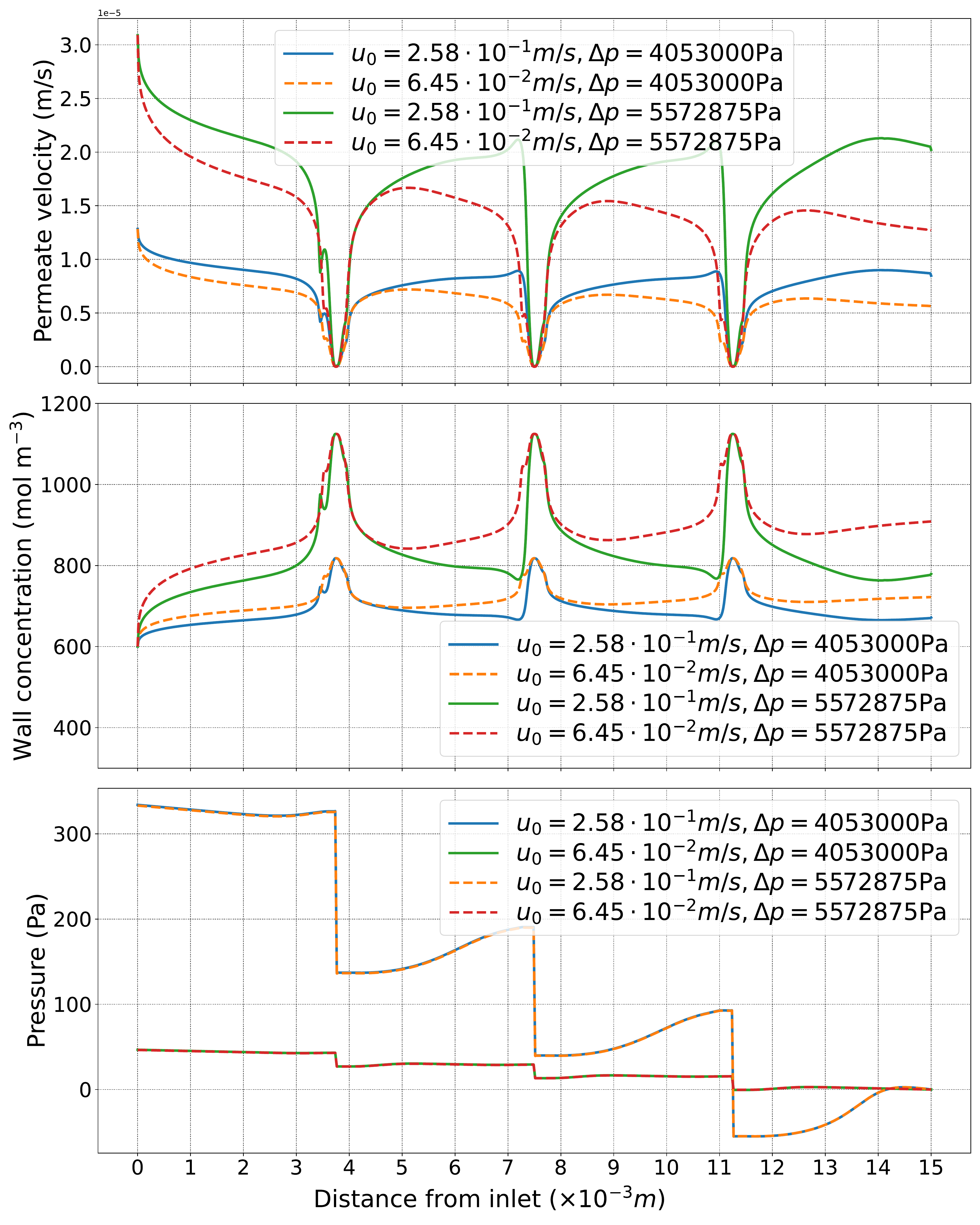}
	\caption{Test 1. Comparison between permeate velocity, concentration level and relative pressure drop on the membrane walls along the line $y=0$ and $x\in[0,L]$ using the cavity spacers configuration.}
	\label{fig:wallcompcavity}
\end{figure}
\begin{figure}[!h]
	\centering
	\includegraphics[scale=0.25]{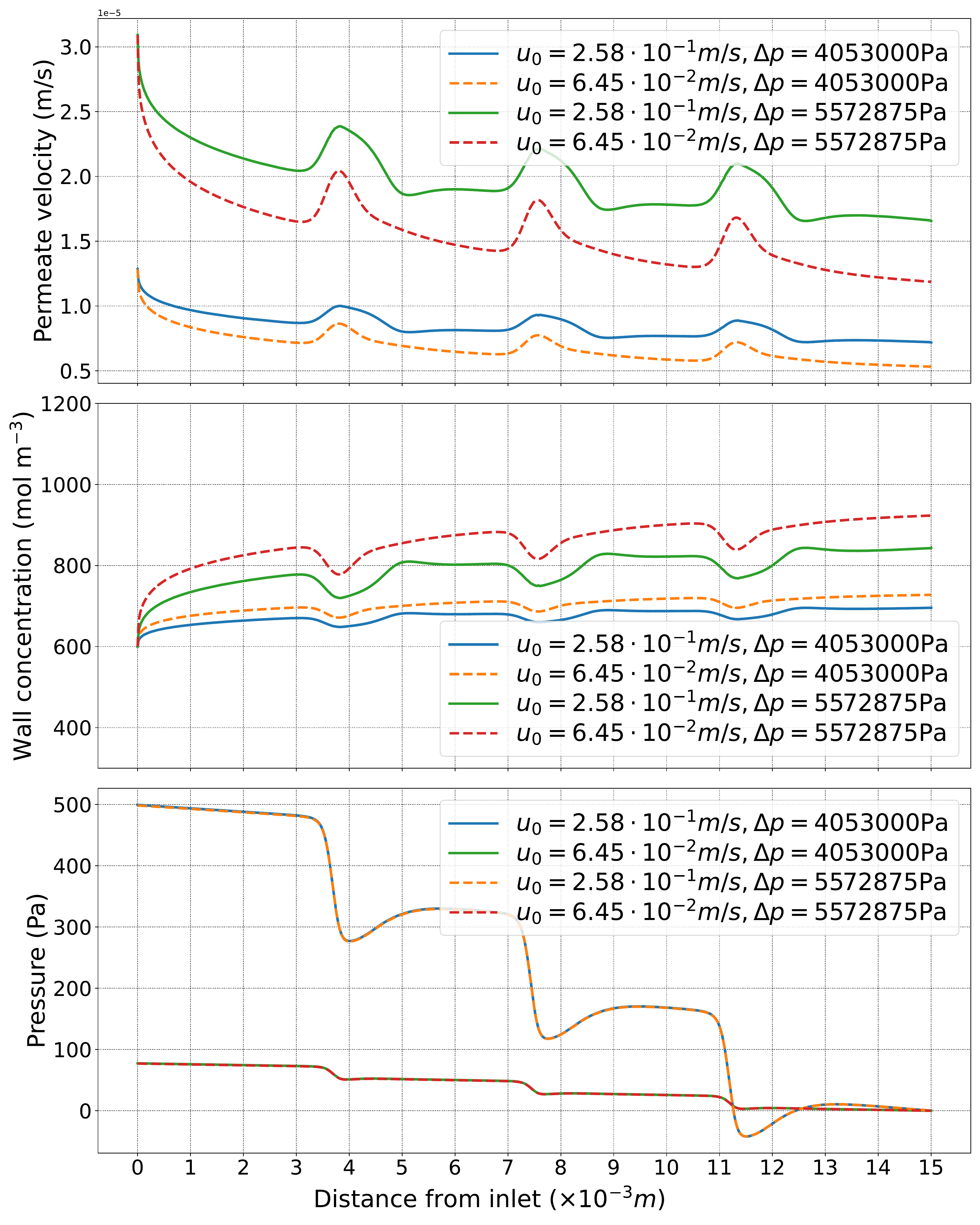}
	\caption{Test 1. Comparison between permeate velocity, concentration level and relative pressure drop on the membrane walls along the line $y=0$ and $x\in[0,L]$ using the submerged spacers configuration.}
	\label{fig:wallcompsubmerge}
\end{figure}
\begin{figure}[!h]
	\centering
	\includegraphics[scale=0.25]{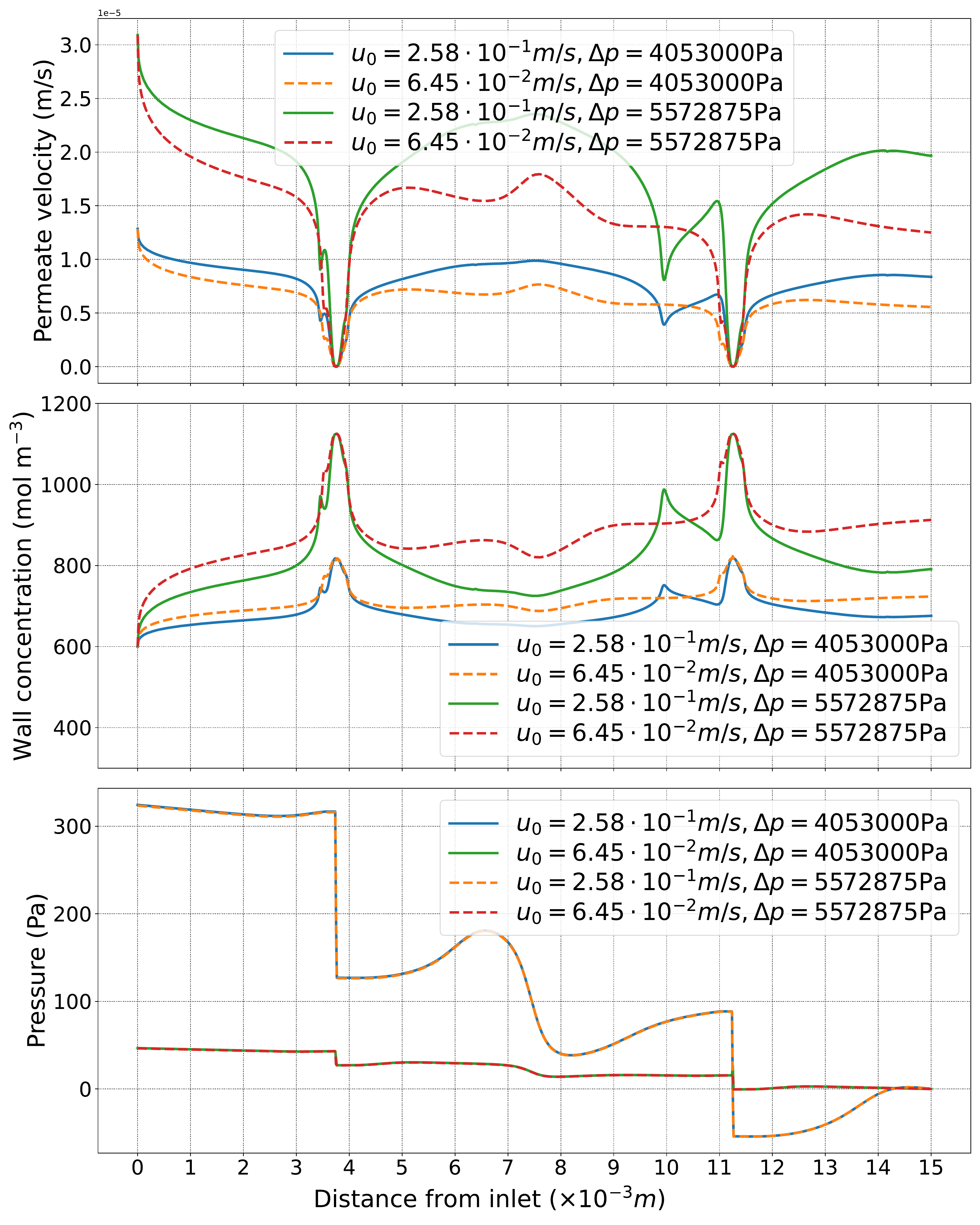}
	\caption{Test 1. Comparison between permeate velocity, concentration level and relative pressure drop on the membrane walls along the line $y=0$ and $x\in[0,L]$ using the zig-zag spacers configuration.}
	\label{fig:wallcompzigzag}
\end{figure}

%

\begin{figure}[!h]
	\centering
	\includegraphics[scale=0.25]{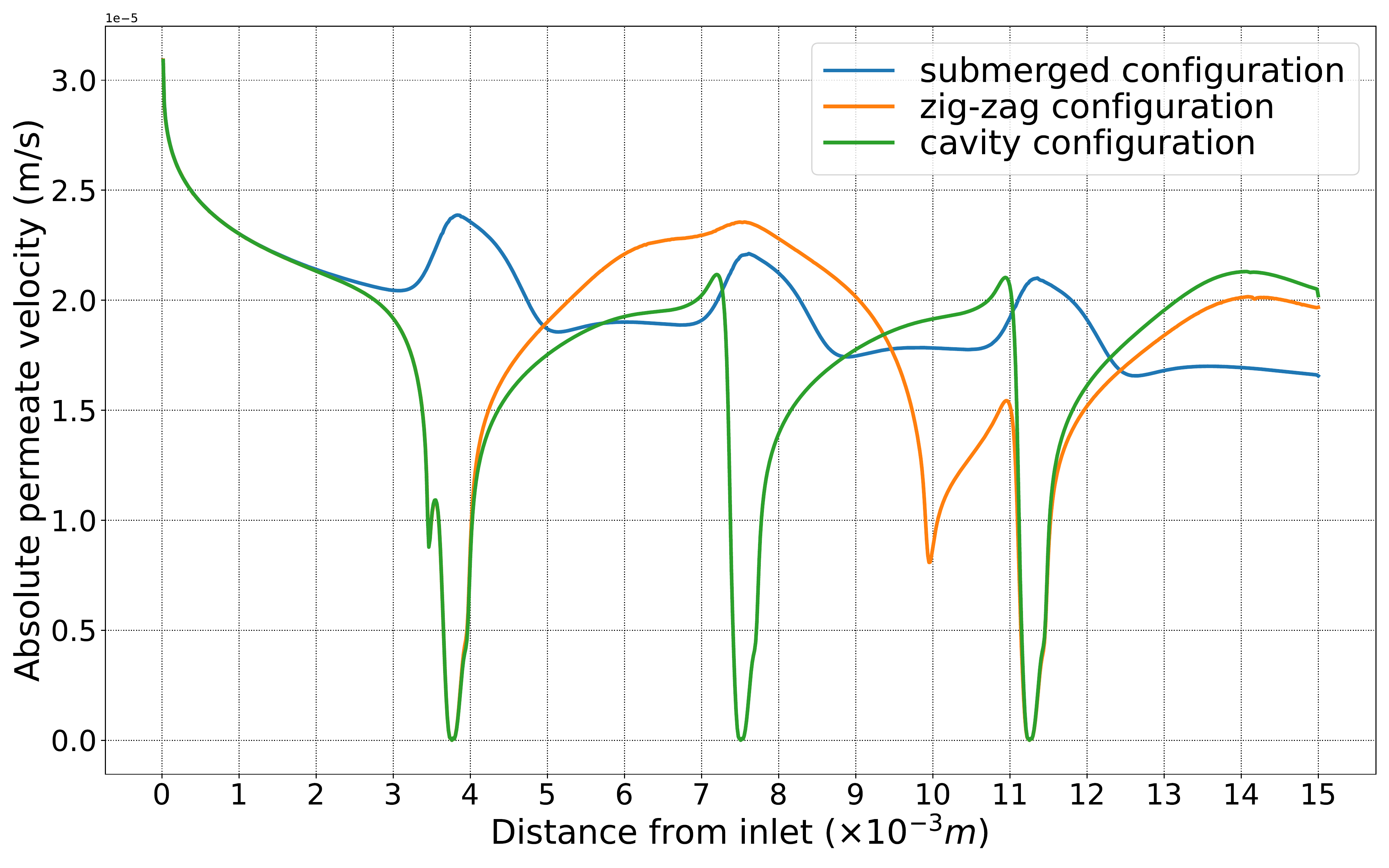}
	\caption{Comparison between absolute permeate of different configurations along the line $y=0$ and $x\in[0,L]$.}
	\label{fig:wallcompconfigs}
\end{figure}
Additionally, the effect of spacer configuration on the membrane's performance needs to be analyzed. For this, we fix the inlet velocity and transmembrane pressure at $2.58\cdot 10^{-1}\,m/s$ and $\Delta P = 5572875\,Pa$, respectively. From Figure \ref{fig:wallcompconfigs}, it can be concluded that spacers in both cavity and zig-zag configurations have a negative impact on permeate velocity on their vicinity, as shown by the sharp decrease in the permeate velocity at the positions where the spacers are located. The opposite effect occurs for the submerged configuration, where a local maximum can be located at each spacer position. This is due to the increase in flow in the channel bottlenecks, which decrease the salt concentration and consequently the osmotic resistance to permeate flux. However, the mixing effects are less effective than the other two configurations, as the concentration at the end of the submerged configuration channel is higher and the permeate flux is the lowest of all three configurations. We can also infer that the high inlet velocity in the zig-zag configuration as seen in Figure \ref{fig:zigzag2} produces a dominant fluid path, which diminishes the concentration between the first two spacers and therefore increases the overall permeate flux.
Lastly, the total volumetric flow per unit width for all membrane channels in different configurations, for the same conditions used in Figure \ref{fig:wallcompconfigs}, is given in Table \ref{tab:fluxcomp}. This quantity can be obtained from direct integration of the definition of volumetric flow from the flux:
\begin{equation}
		\label{eq:volumetric-flow}
		\frac{\dot{V}}{W}=\int_{\Gamma_{m}}\abs{u_{y}(s)} ds,
\end{equation}
where $\dot{V}$ is the total volumetric flow of the permeate from both membrane walls, $W$ is the width of the channel, and $u_{y}\left(s\right)$ is the permeate flux at a distance from the inlet $s$. The absolute value is used because on the lower side of the membrane the velocity vector is negative with respect to the coordinate origin.

\begin{table}[!h]
\caption{Total permeate flow per unit length $\dot{V}/W$ for all membrane channel simulations.}
	\label{tab:fluxcomp}
\centering
\begin{tabular}{llll}
	\hline
	Configuration & $u_{0}\,[m/s]$ & $\Delta P\,[Pa]$ & $\dot{V}/W\,[m^{3}/(s\cdot m)]$ \\ \hline
	Cavity        & 0.258          & 4053000          & $2.39347\cdot 10^{-7}$          \\
	Cavity        & 0.129          & 4053000          & $2.16131\cdot 10^{-7}$          \\
	Cavity        & 0.064          & 4053000          & $1.93002\cdot 10^{-7}$          \\
	Cavity        & 0.258          & 5572875          & $5.59461\cdot 10^{-7}$          \\
	Cavity        & 0.129          & 5572875          & $5.00098\cdot 10^{-7}$          \\
	Cavity        & 0.064          & 5572875          & $4.41913\cdot 10^{-7}$          \\
	Submerged     & 0.258          & 4053000          & $2.51089\cdot 10^{-7}$          \\
	Submerged     & 0.129          & 4053000          & $2.28837\cdot 10^{-7}$          \\
	Submerged     & 0.064          & 4053000          & $2.03092\cdot 10^{-7}$          \\
	Submerged     & 0.258          & 5572875          & $5.88266\cdot 10^{-7}$          \\
	Submerged     & 0.129          & 5572875          & $5.31309\cdot 10^{-7}$          \\
	Submerged     & 0.064          & 5572875          & $4.66112\cdot 10^{-7}$          \\
	Zig-zag       & 0.258          & 4053000          & $2.38891\cdot 10^{-7}$          \\
	Zig-zag       & 0.129          & 4053000          & $2.16246\cdot 10^{-7}$          \\
	Zig-zag       & 0.064          & 4053000          & $1.93055\cdot 10^{-7}$          \\
	Zig-zag       & 0.258          & 5572875          & $5.57996\cdot 10^{-7}$          \\
	Zig-zag       & 0.129          & 5572875          & $5.01593\cdot 10^{-7}$          \\
	Zig-zag       & 0.064          & 5572875          & $4.42017\cdot 10^{-7}$          \\ \hline
\end{tabular}
\end{table}
From the values of Table \ref{tab:fluxcomp} it follows that the submerged configuration has the most production of permeate of all three configurations for all of the considered variations of $u_{0}$ and $\Delta P$. Also, zig-zag and cavity configurations show similar results, with zig-zag being slightly more efficient than its alternate counterpart. However, for longer channels this tendency may be inverted, as the submerged configuration concentration profile may rise well above the other configurations, therefore diminishing its permeate flux. The results also show that quadrupling the value of $u_{0}$ while mantaining $\Delta P$ can generate an increase in $\dot{V}/W$ of up to $24\%$ for the lower pressure and $26\%$ for the higher pressure. Additionally, increasing the pressure from $4053000\,Pa$ to $5572875\,Pa$ drastically rises the value up to about $130\%$ for the lower inlet velocity and up to $134\%$ for the higher value.
\section{Conclusions}
In this work we have presented the performance of the numerical method for a RO model using the Nitsche technique. The method has been successfully validated by testing the accuracy with respect to the pressure drop, as well as the number of cells required to have independence with respect to the mesh size. However, for the highest speed of operation, it was required to implement the SUPG scheme to stabilize the algorithm.

Also, we can follow the same approach in this study to obtain the classical penalty formulation, which also serves to impose the permeability condition. However, it is well known that, for very large values of $\alpha$, the system will be ill conditioned, and consequently, the convergence of the method will be affected \cite{barrett1986finite}.

The obtained tendencies for permeate flux against transmembrane pressure and inlet mass flow are in accordance with previous works. It was also determined that the bottlenecks caused by the spacers were the main contributors to pressure loss (and therefore, energy loss), being the biggest drop associated with the submerged configuration. Considering all process parameters constant, the greatest permeate flux was obtained for the submerged configuration, although the higher concentration at the exit in comparison with its counterparts suggest that this advantage may not hold for bigger channels, as the submerged configuration is less effective at disrupting the boundary layer at the membrane walls.\\ 
For future development of the proposed model, further studies are needed to establish the influence of the Nitsche technique on the stability of the method. Additionally, other factors must be considered to accurately capture the complex behavior of a membrane module in operational conditions, e.g., the inclusion of fouling agents and the effects of three-dimensional spacer mixing.
\bibliographystyle{siam}
\bibliography{biblio}
\end{document}